\algrenewcommand{\algorithmicrequire}{\textbf{Input:}}
\algrenewcommand{\algorithmicensure}{\textbf{Output:}}
\algnewcommand{\AND}{\textbf{ and }}
\definecolor{myred}{rgb}{0,0,0}
\def\thline{\noalign{\hrule height.8pt}}
\numberwithin{equation}{section}
\newtheorem{theorem}{Theorem}[section]
\newtheorem{lemma}{Lemma}[section]
\newtheorem{definition}{Definition}[section]
\newtheorem{remark}{Remark}[section]
\def\bc{\mathbf{c}}
\def\bn{\mathbf{n}}
\def\bt{\mathbf{t}}
\def\br{\mathbf{r}}
\def\btau{\mathbf{\tau}}
\long\def\sefue#1{}
\def\myline(#1,#2)(#3,#4){
	((#2 - #4)*(\t - #1)/(#1 - #3)) + #2
}
\newcommand*{\parambernbez}[4]{
	plot ({(array({#3},0)*(\t)^2 + array({#2},0)*2*#4*(1-\t)*\t + array({#1},0)*(1-\t)^2)/((\t)^2 + 2*#4*(1-\t)*\t + (1-\t)^2)},{(array({#3},1)*(\t)^2 + array({#2},1)*2*#4*(1-\t)*\t + array({#1},1)*(1-\t)^2)/((\t)^2 + 2*#4*(1-\t)*\t + (1-\t)^2)})
}
\title[\emph{ConicCurv}]{\emph {ConicCurv}: A curvature estimation algorithm for planar polygons}
\author[R. D\'iaz-Fuentes et al]{$^{\natural}$Rafael D\'iaz-Fuentes 
\and $^{\sharp}$Jorge Estrada-Sarlabous 
\and $^{\sharp}$Victoria Hern\'andez-Mederos}
\begin{document}
	
	\maketitle
	\centerline{$^{\natural}$Department of Mathematics and Computer Science, University of Cagliari, Cagliari, Italy} 
	\centerline{({\tt rafael.diazfuentes@unica.it}) }
	\medskip
	\centerline{$^{\sharp}$Instituto de Cibern\'etica, Matem\'atica y F\'isica, La Habana, Cuba} 
	\centerline{({\tt \{jestrada,vicky\}@icimaf.cu}).}
	
	\begin{abstract} \justifying 
		\emph{ConicCurv} is a new \emph{derivative-free} algorithm to estimate the curvature of a plane curve from a sample of data points. It is based on a known tangent estimator method grounded on classic results of Projective Geometry and B\'ezier rational conic curves. The curvature values estimated by \emph{ConicCurv} are invariant to Euclidean changes of coordinates and reproduce the exact curvature values if the data are sampled from a conic.\\
		We show that \emph {ConicCurv} has convergence order $3$ and, if the sample points are \emph {uniformly arc-length distributed}, the convergence order is $4$. The performance of \emph {ConicCurv} is compared with some of the most frequently used algorithms to estimate curvatures and its performance is illustrated in the calculation of the elastic energy of subdivision curves and the location of L-curves corners.
		
		\textbf{Keywords:} curvature estimation, rational conics, geometry processing, L-curves, regularization parameter, \emph{elastica}.\\
		\textbf{2020 Mathematics Subject Classification:} 53Z50, 53A15, 68W25, 65D15, 65F22  
	\end{abstract}
		
	\section{Introduction}
	\label{sec:intro}
	
	In several tasks of Computer Aided Geometric Design and Computer Vision it is necessary to estimate curvature values from a sample of few unevenly distributed points \cite{Belyaev,Be99,Ca98,curvat3D,Lewiner,Wo93}. Those values may be used to design curves or surfaces for free design applications, such as typography design, cartoons, and games, among others. %, and the data consists on a set of few unevenly distributed points contributed by a designer in order to create a curve or surface that fits his design.
	The need comes from the interpolation or approximation of a planar set of points with methods that require the estimation of curvature values \cite{SofiaCubicAspline}.
	Another situation where the data consists of a set of few nonuniform distributed points is found in some Tykhonov regularization problems \cite{CMCR00,CGG02,KiRa19}, where the points describe a curve with L-shape or in another cases an U-shape \cite{ChenUcurve,Krawczyk}. In these problems, the generation of points on the curves is computationally expensive. When we seek to estimate the location of the points of relative maximum of curvature (corner points) of L-curves or U-curves, it is necessary to locate the desired point with a small sample of points.

	The most known curvature estimation methods in the literature use tangent vectors, osculating circles, or first and second order derivatives, depending on the selected curvature definition. In the first case one needs to estimate the derivative of the tangent vector with respect to arc-length. In the second case, the radius of the osculating circle that touches the curve at the desired point is estimated as the radius of the circumference that interpolates that point and two consecutive ones (anterior and posterior neighbors, according to the order of the data). This method offers an intuitive control of the assigned curvature values, but only reproduces the exact curvature values if the data come from a circle, which is a very particular case of the shape that the designer could wish for. In the third case, the estimated values of the first and second derivatives at the point are used to compute the curvature. We refer the interested reader to \cite{Belyaev,Hermann07,Lewiner}, and references therein, for a fairly complete study of methods to estimate curvature.

	The present work introduces \emph {ConicCurv}, a new \emph{derivative-free} algorithm to estimate the curvature of a plane curve from a sample of data points that aims to be consistent with the geometry suggested by the data.
	
	Our proposed method assigns curvature values at each point of an ordered set of data in the plane, calculating the average of the curvatures at the point of two conic curves. Both curves interpolate the given point as well as the previous and the next neighbors, without computing the equations of these interpolating conics. The first interpolating curve is chosen as the rational conic in Bernstein-B\'ezier form that, additionally to the three points, interpolates the tangent directions assigned to the previous point and the point in question. Analogously, the second interpolating curve is chosen as the rational conic in Bernstein-B\'ezier form that, additionally to the three points, interpolates the tangent directions assigned to the point in question and its subsequent neighbor.
	
	If the tangent directions are not provided as data, we estimate them using the tangent estimation method in \cite{Albrecht2}. In what follows, we refer to this tangent estimation method as ABFH. By coupling ABFH with \emph {ConicCurv} very desirable properties are inherited, such as high convergence order, reproduction of exact curvature values, if the points are sampled from conics, invariance under Euclidean transformations, and local control of the geometry.
	
	In Section \ref{sec:preprocessing} we show the preprocessing of the data before curvature estimation. Part of that preprocessing consists on splitting the polygonal defined by the data in convex sub-polygons whose vertices are a subset of the original data points in the previously specified order. If there are ``many'' convexity changes (that is, a considerable number of sub-polygons), the data can be smoothed and then tangents are estimated at each point.
	
	Taking into account the preprocessing of Section \ref{sec:preprocessing}, in Section \ref{sec:curvcomp} curvature values are assigned at each point with a computational cost of $O(n)$, {where $n$ is the number of data points}, for data preprocessing and curvature estimation with \emph{ConicCurv}. In Section \ref{sec:apporder} the approximation order of the proposed curvature estimation is shown. In Section~\ref{sec:num_ex} the numerical results obtained with ConicCurv are compared with those obtained using some methods
	of curvature estimation reported in the literature \cite{Be99,Lewiner}.
	
	Finally, in Sections \ref{sec:Lcurv} and \ref{snake} applications of \emph {ConicCurv} to the estimation of the elastic energy of subdivision curves and to the estimation of the location of L-curve corners are reported, respectively.

	\section{Data preprocessing}
	\label{sec:preprocessing}
	
	Given an ordered set of points in the plane, in addition to the coordinates of the points, the \emph{ConicCurv} method also requires tangent directions assigned to these points. If the tangent directions are not available \emph{a priori}, the method of assigning tangent directions that we used, as shown in Section \ref{subsec:tangent}, assumes the convexity of the polygon formed by the data. Therefore, the first step to be carried out in the preprocessing is the splitting of the initial polygon into convex sub-polygons; see Section~\ref{subsec:anal-de-conv}. We assume that there are not three consecutive collinear points in the given set. {In that case, we set the curvature in the middle point equals to zero and no estimation is performed. This assignment is consistent with the fact that the straight lines are the only irreducible algebraic curves which have an arc that interpolates three collinear points.}
	
	As notation, by $\mathcal{P} = \{P_{i}, i = 1, \ldots, n\}$ we refer to an ordered set of points and, at the same time, to the polygon having those $n$ points as vertices in given order. Consequently, $\mathcal{P}(s:r) = \{P_i, i=s,\ldots,r\}$, with $1\leq s \leq r \leq n$, $\mathcal{P}(i) = P_i$, and $\lvert \mathcal{P} \rvert = n$ denotes the cardinality of the set $\mathcal{P}$.

	\subsection{\emph{Pascal's theorem} and tangent estimation}
	\label{subsec:tangent}
	
	There are several algorithms for estimating tangent vectors given a set of points in the plane $\mathbb{R}^2$. If the polygon that joins pairs of consecutive points in the given order is convex, it is shown that ABFH has an order of approximation $4$, higher than other methods in the specialized literature. This tangent estimator algorithm is based on \emph{Pascal's theorem}.
	
	\begin{theorem}[Pascal's theorem]
		\label{theo:Pascal}
		The three pairs of opposite sides of an hexagon inscribed in a conic section intersect at three collinear points.
	\end{theorem}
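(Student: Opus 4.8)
The plan is to give the classical projective--algebraic proof based on B\'ezout's theorem and a pencil of cubic curves, working in the complex projective plane $\mathbb{P}^2$ so that every intersection point in question is well defined (parallel opposite sides meeting at infinity are then handled automatically, and ``collinear'' is allowed to mean lying on the line at infinity). Label the hexagon vertices $P_1,\dots,P_6$ in cyclic order on the conic $Q$, and for indices $i,j$ write $\ell_{ij}$ for a linear form whose zero set is the line through $P_i$ and $P_j$. The three pairs of opposite sides are $(\ell_{12},\ell_{45})$, $(\ell_{23},\ell_{56})$ and $(\ell_{34},\ell_{61})$; I denote their respective intersection points by $X$, $Y$ and $Z$, and the goal is to show that $X,Y,Z$ are collinear.

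First I would introduce the two cubic forms
\[
F=\ell_{12}\,\ell_{34}\,\ell_{56}, \qquad G=\ell_{23}\,\ell_{45}\,\ell_{61},
\]
products of the two complementary triples of sides chosen so that every vertex and every one of $X,Y,Z$ is forced onto both cubics. A direct check confirms that $F$ and $G$ vanish at the nine points $P_1,\dots,P_6,X,Y,Z$: each vertex lies on exactly one factor of $F$ and one factor of $G$, while each of $X,Y,Z$ is by construction the meeting point of a factor of $F$ with a factor of $G$ (for instance $X=\ell_{12}\cap\ell_{45}$ kills $\ell_{12}$ in $F$ and $\ell_{45}$ in $G$).

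Next I would form the pencil of cubics $H_{[\lambda:\mu]}=\lambda F+\mu G$; every member still vanishes at those nine points. Choosing a seventh point $P_0$ on $Q$ distinct from the vertices, I would fix $[\lambda:\mu]$ by the single linear condition $H(P_0)=0$, obtaining a cubic $H$ that meets the conic $Q$ in the seven distinct points $P_0,P_1,\dots,P_6$. Since a cubic and an irreducible conic meet in at most $3\cdot 2=6$ points unless they share a component (B\'ezout), the conic $Q$ must be a component of $H$; as $\deg H=3$ and $\deg Q=2$, this forces a factorization $H=Q\cdot L$ with $L$ a linear form, i.e.\ a line. Finally, because $H$ vanishes at $X,Y,Z$ while the factor $Q$ does not, the line $L$ must vanish at all three, so $X,Y,Z$ lie on the Pascal line $\{L=0\}$.

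The main obstacle I expect is not the linear algebra but the nondegeneracy bookkeeping. One must ensure that the six vertices are distinct, that $Q$ is an irreducible (nondegenerate) conic so that B\'ezout applies, and crucially that $X,Y,Z\notin Q$, since it is exactly this that forces the collinearity to sit in the linear factor $L$ rather than being absorbed into $Q$. These conditions hold on a Zariski-dense open set of inscribed hexagons, and the remaining degenerate configurations (tangencies, coincident vertices, opposite sides meeting at infinity) follow by a continuity/Zariski-closure argument once the generic case is settled.
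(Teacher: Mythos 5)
Your proof is correct, but note that the paper itself offers no proof of this statement: Pascal's theorem is quoted there as classical background (it is only restated in cross-product form as Theorem \ref{theo:Pascal2} and then used, in a limiting form, to justify the ABFH tangent estimator), so there is no internal argument to compare yours against. What you give is the standard algebraic proof via a pencil of cubics and B\'ezout: the factorizations $F=\ell_{12}\ell_{34}\ell_{56}$ and $G=\ell_{23}\ell_{45}\ell_{61}$ are the right complementary triples of sides, the nine base points are verified correctly, and forcing a member of the pencil through a seventh point of the conic does yield $H=Q\cdot L$ with the Pascal line $L$. Two of the caveats you flag as ``bookkeeping'' can in fact be discharged immediately rather than by a Zariski-closure argument: since a line meets an irreducible conic in at most two points, distinctness of the six vertices already guarantees both that the six sides are pairwise distinct (so $F$ and $G$ are linearly independent and the chosen $H$ is a nonzero cubic) and that $X,Y,Z\notin Q$ (e.g.\ $X\in Q$ would force $X\in\{P_1,P_2\}\cap\{P_4,P_5\}$). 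The only genuinely degenerate case left outside your argument is a reducible ``conic section'' (two lines), where the statement becomes Pappus's theorem and B\'ezout no longer applies as stated; since the paper's usage concerns nondegenerate conics, this is a reasonable exclusion, but it is worth saying explicitly rather than folding it into the continuity remark.
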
%SONO QUI%%%%%%%%%%%%%%%%%%%%%%%%%%%%%%%%%%%
	
	\begin{figure}[h]
		\centering
		\scalebox{.5}{
			\begin{tikzpicture}	
				\clip (-3.2,-4.5) rectangle (8.5,2);
				\draw[thick,domain=-3:3,samples=100] plot (\x,{-.52*(\x)^2});
				
				\foreach \c/\l in {-2.7/1,2.7/6,-1.9/2,1.9/5,-.2/3,1./4}
				\coordinate (\l) at (\c,-.52*\c*\c);
				
				\node[below right=1mm] at (1) {\large $P_{i-2}$};
				\node[right=1mm] at (2) {\large $P_{i-1}$};
				\node[below=1mm] at (3) {\large $P_i$};
				\node[above right=0mm] at (4) {\large $P_{i+1}$};
				\node[right=1mm] at (5) {\large $P_{i+2}$};
				\node[below left=1mm] at (6) {\large $P_{i+3}$};
				
				\draw[thin,gray,variable=\t,name path=line 12] plot[domain=-3.5:10] (\t,{\myline(-2.7,-.52*2.7*2.7)(-1.9,-.52*1.9*1.9)});
				\draw[thin,gray,variable=\t,name path=line 45] plot[domain=-3.5:10] (\t,{\myline({1.1},{-.52*1.1*1.1})({1.9},{-.52*1.9*1.9})});
				\shade[ball color=red,name intersections={of=line 12 and line 45, by={a}}] (a) node[left=1mm,black] {\large $a_i$} circle (2.5pt);
				
				\draw[thin,gray,variable=\t,name path=line 23] plot[domain=-3.5:10] (\t,{\myline({-.2},{-.52*.2*.2})({-1.9},{-.52*1.9*1.9})});
				\draw[thin,gray,variable=\t,name path=line 56] plot[domain=-3.5:10] (\t,{\myline({2.7},{-.52*2.7*2.7})({1.9},{-.52*1.9*1.9})});
				\shade[ball color=red,name intersections={of=line 23 and line 56, by={b}}] (b) node[right=1mm,black] {\large $b_i$} circle (2.5pt);
				
				\draw[thin,gray,variable=\t,name path=line 34] plot[domain=-3.5:10] (\t,{\myline({-.2},{-.52*.2*.2})({1.1},{-.52*1.1*1.1})});
				\draw[thin,gray,variable=\t,name path=line 16] plot[domain=-3.5:10] (\t,{\myline({-2.7},{-.52*2.7*2.7})({2.7},{-.52*2.7*2.7})}); %(-3.5,-4) -- +(13,0);
				\shade[ball color=red,name intersections={of=line 34 and line 16, by={c}}] (c) node[above=1mm,black] {\large $c_i$} circle (2.5pt);
				
				\draw[red] (a) -- (c);
				\shade[ball color=blue] (a) circle (2.5pt);
				\shade[ball color=blue] (b) circle (2.5pt);
				\shade[ball color=blue] (c) circle (2.5pt);
				
				\foreach \c in {-2.7,2.7,-1.9,1.9,-.2,1.1}
				\shade[ball color=blue] (\c,-.52*\c*\c) circle (2.5pt);
			\end{tikzpicture}
		}
		\caption{Pascal's theorem}
		\label{fig:pascal}
	\end{figure}
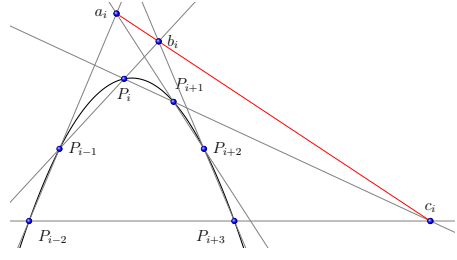
	
	The \emph{principle of duality} of Projective Geometry allows us to express the union of points, that is, the line that joins them, and the intersection of lines by the vector product (also known as exterior or cross product $\wedge$ \cite{Farin}). It holds that
	%\footnote{Also known as exterior or cross product ``$\wedge$'' \cite{Farin}}. So, it holds:
	\begin{math}
		L_{P_{i},P_{j}} = P_{i} \wedge P_{j}
	\end{math}
	is the line that passes through the points $P_i$ and $P_j$, and
	\begin{math}
		L_{P_{i},P_{j}} \wedge L_{P_{k},P_{l}}
	\end{math}
	is the intersection point of lines $L_ {P_{i},P_{j}}$ and $L_{P_{k},P_{l}}$. Then, Theorem \ref{theo:Pascal} can also be stated in the following way.
	
	\begin{theorem}
		\label{theo:Pascal2}
		Let $P_{i-2}, P_{i-1},P_{i}, P_{i+1}, P_{i+2}, P_{i+3}$ be six distinct points \emph{in general position}, i.e., such that there is no line interpolating three of them, then it holds that the intersection points $a_i$, $b_i$, and $c_i$, defined by
		\begin{displaymath}
			a_i = L_{P_{i-2}, P_{i-1}} \wedge L_{P_{i}, P_{i+1}}, \quad
			b_i = L_{P_{i-1},P_{i}} \wedge L_{P_{i+1}, P_{i+2}}, \quad
			\mbox{and} \quad
			c_i= L_{P_i, P_{i+1}} \wedge L_{P_{i-2}, P_{i+2}},
		\end{displaymath}
		are \emph{collinear} points; see Fig. \ref{fig:pascal}.
	\end{theorem}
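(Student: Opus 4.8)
\emph{Proof plan.} The statement is nothing but Pascal's theorem (Theorem \ref{theo:Pascal}) transcribed into the cross--product language of the duality principle introduced just above it, so the plan is to make that dictionary explicit and then quote Theorem \ref{theo:Pascal}. First I would fix homogeneous coordinates on the projective plane, representing each vertex $P_j$ by a nonzero vector in $\mathbb{R}^3$ and each line by its coefficient vector, so that the join $L_{P_j,P_k}=P_j\wedge P_k$ is the line through the two points while, dually, $\ell\wedge m$ is the meet of two lines $\ell,m$. In this model three points $p,q,r$ are collinear exactly when the scalar triple product $(p\wedge q)\cdot r=\det[\,p\;q\;r\,]$ vanishes, so the conclusion to be reached is $\det[\,a_i\;b_i\;c_i\,]=0$.

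Next I would read the configuration off Fig.~\ref{fig:pascal}: the six vertices, taken in the cyclic order $P_{i-2},P_{i-1},P_i,P_{i+1},P_{i+2},P_{i+3}$ and \emph{inscribed} in the conic section, form a hexagon whose three pairs of \emph{opposite} sides are $\{L_{P_{i-2},P_{i-1}},L_{P_{i+1},P_{i+2}}\}$, $\{L_{P_{i-1},P_i},L_{P_{i+2},P_{i+3}}\}$ and $\{L_{P_i,P_{i+1}},L_{P_{i+3},P_{i-2}}\}$. Their three meets are the points $a_i$, $b_i$, $c_i$ whose collinearity is asserted, and Theorem \ref{theo:Pascal} says precisely that these three meets lie on one line; pushing this back through the dictionary yields $\det[\,a_i\;b_i\;c_i\,]=0$, which is the claim.

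The only delicate point, and the step I expect to be the real obstacle, is the opposite--side bookkeeping. In a hexagon the side \emph{opposite} to the one joining two consecutive vertices is the side three positions further along the cycle; getting this pairing right is exactly what forces $P_{i+3}$ to enter $b_i$ and $c_i$ and makes the three intersections the genuine Pascal points. If one instead pairs adjacent or near--adjacent sides, the resulting meets are intersections of non--opposite sides, and by the converse of Pascal (the Braikenridge--Maclaurin theorem) such meets are collinear only for special positions of the vertices, not in general. A secondary point is that Theorem \ref{theo:Pascal} genuinely requires the vertices to lie on a conic: the general--position hypothesis guarantees that the six sides and their meets are well defined and distinct, whereas the conic itself is the one interpolating the sample in the setting of Section~\ref{subsec:tangent}. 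Once the pairing and the conic are fixed no computation is needed beyond invoking Pascal; as an independent check one may normalize the conic projectively to $y=x^2$, set $P_j=(t_j,t_j^2)$, compute $a_i,b_i,c_i$ as the above cross products, and verify directly that their $3\times3$ determinant vanishes identically in the parameters $t_j$.
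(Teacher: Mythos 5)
Your plan is correct and coincides with the paper's own (implicit) proof: the paper gives Theorem \ref{theo:Pascal2} no separate argument, introducing it with ``Theorem \ref{theo:Pascal} can also be stated in the following way,'' which is exactly your transcription of Pascal's theorem through the join/meet cross-product dictionary, followed by the opposite-side bookkeeping on the hexagon $P_{i-2}P_{i-1}P_iP_{i+1}P_{i+2}P_{i+3}$. Your two flagged caveats---that the pairing must be side $s_k$ with side $s_{k+3}$, and that the conic hypothesis must be added since ``no three collinear'' alone does not place six points on a conic---are both genuinely needed, and your normalization to $y=x^2$ is legitimate because general position rules out a degenerate conic.

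Be aware, however, that what you prove is the configuration of Fig.~\ref{fig:pascal}, \emph{not} the formulas displayed in the statement, and the two disagree. In the displayed formulas $P_{i+3}$ never occurs, and the line $L_{P_i,P_{i+1}}$ occurs in both $a_i$ and $c_i$; since each side of a hexagon belongs to exactly one of the three opposite-side pairs, no cyclic ordering of the six points realizes the displayed triple as Pascal points. Indeed the displayed statement is false even for points on a conic: on $y=x^2$ take $P_{i-2},\ldots,P_{i+2}=(-2,4),(-1,1),(0,0),(1,1),(2,4)$ (with $P_{i+3}$ anywhere on the parabola, as it does not enter the formulas); then $a_i=\left(-\tfrac12,-\tfrac12\right)$, $b_i=\left(\tfrac12,-\tfrac12\right)$, $c_i=(4,4)$, and these are not collinear. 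The printed equations are evidently a slip that imports the five-point limit configuration of Fig.~\ref{tangente} and Algorithm \ref{alg:estima5pts}, where $a_i$ and $b_i$ are exactly as displayed but the third collinear point is $c_i=L_{P_{i-2},P_{i+2}}\wedge L_{a_i,b_i}$, the hexagon side $L_{P_i,P_{i+1}}$ having degenerated into the unknown tangent at $P_i$; replacing that tangent by the chord $L_{P_i,P_{i+1}}$, as the printed $c_i$ does, destroys collinearity. So your ``delicate point'' about pairing near-adjacent sides applies verbatim to the paper's own display: your argument proves the intended theorem (the one drawn in Fig.~\ref{fig:pascal}, with $a_i=L_{P_{i-2},P_{i-1}}\wedge L_{P_{i+1},P_{i+2}}$, $b_i=L_{P_{i-1},P_i}\wedge L_{P_{i+2},P_{i+3}}$, $c_i=L_{P_i,P_{i+1}}\wedge L_{P_{i+3},P_{i-2}}$), and no argument could prove the literal displayed one. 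It would strengthen your write-up to state this correction explicitly rather than silently substituting the figure's definitions.
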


	\begin{remark}
		Let us note that the point $c_i$ can be computed once $a_i$ and $b_i$ have been obtained, as $c_i= L_{P_{i-2}, P_{i+2}} \wedge L_{a_i,b_i}$. This approach is exploited in the following.
	\end{remark}

	\begin{figure}[h]
		\centering
		\scalebox{.5}{
			\begin{tikzpicture}
				\clip (-3.2,-4.5) rectangle (8.5,2);
				\draw[thick,domain=-3:3,samples=100] plot (\x,{-.52*(\x)^2});
				
				\foreach \c/\l in {-2.7/1,2.7/5,-1.9/2,1.9/4,.5/3}
				\coordinate (\l) at (\c,-.52*\c*\c);
				
				\node[below right=1mm] at (1) {\large $P_{i-2}$};
				\node[right=1mm] at (2) {\large $P_{i-1}$};
				\node[below=1mm] at (3) {\large $P_i$};
				\node[right=1mm] at (4) {\large $P_{i+1}$};
				\node[below left=1mm] at (5) {\large $P_{i+2}$};
				
				\draw[thin,gray,variable=\t,name path=line 12] plot[domain=-3.5:10] (\t,{\myline(-2.7,-.52*2.7*2.7)(-1.9,-.52*1.9*1.9)});\draw[thin,gray,variable=\t,name path=line 34] plot[domain=-3.5:10] (\t,{\myline(.5,-.52*.5*.5)(1.9,-.52*1.9*1.9)});
				\shade[ball color=red,name intersections={of=line 12 and line 34, by={a}}]
				(a) node[left=1mm,black] {\large $a_i$} circle (2.5pt);
				
				\draw[thin,gray,variable=\t,name path=line 23] plot[domain=-3.5:10] (\t,{\myline(.5,-.52*.5*.5)(-1.9,-.52*1.9*1.9)});
				\draw[thin,gray,variable=\t,name path=line 45] plot[domain=-3.5:10] (\t,{\myline(2.7,-.52*2.7*2.7)(1.9,-.52*1.9*1.9)});
				\shade[ball color=red,name intersections={of=line 23 and line 45, by={b}}]
				(b) node[right=1mm,black] {\large $b_i$} circle (2.5pt);
				
				\draw[thin,gray,variable=\t,name path=line 15] plot[domain=-3.5:10] (\t,{\myline(-2.7,-.52*2.7*2.7)(2.7,-.52*2.7*2.7)}); %(-3.5,-4) -- +(13,0);
				\draw[thin,gray,name path=line ab] (a) ++($3*(a)-3*(b)$) -- ++($-15*(a)+15*(b)$);
				\shade[ball color=red,name intersections={of=line 15 and line ab, by={c}}]
				(c) node[above=1mm,black] {\large $c_i$} circle (2.5pt);
				
				\draw[ultra thick] (3) ++($3*(3)-3*(c)$) -- ++($-15*(3)+15*(c)$);
				
				\foreach \c in {-2.7,2.7,-1.9,1.9,.5}
				\shade[ball color=blue] (\c,-.52*\c*\c) circle (2.5pt);
				
				\shade[ball color=red] (a) circle (2.5pt);
				\shade[ball color=red] (b) circle (2.5pt);
				\shade[ball color=red] (c) circle (2.5pt);
			\end{tikzpicture}
		}
		\caption{Tangent estimation in $P_i$. This tangent is the limit of the secant $L_{P_i,P_{i+1}}$ in Fig. \ref{fig:pascal}.}
		\label{tangente}
	\end{figure}
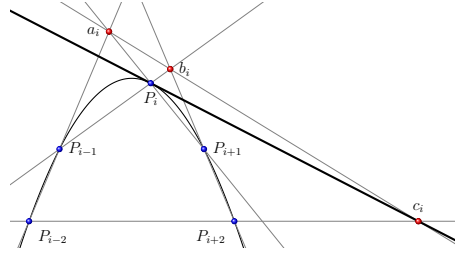
	
	In \cite{Albrecht2} the tangent line of the conic interpolating $P_{j}$, $j = i-2, \ldots, i+2$, at the middle point, $P_i$, is estimated as a limit case of Pascal's theorem \ref{theo:Pascal2}. When two consecutive points collapse to one point, then the tangent line at this point is the limit of the secant between the two collapsing points; see Fig. \ref{tangente}. Hence, the tangent line $r_i$ at $P_i$ of the conic interpolating $P_{i-2}, P_{i-1},P_{i}, P_{i+1}, P_{i+2}$ is $r_i=L_{c_i,P_i}$.
	
	Given the points $\{P_{j}, j = i-2, \ldots, i+2\}$, the tangent line at $P_i$ of the conic interpolating those 5 points is computed using Algorithm \ref{alg:estima5pts}.

	\begin{Algorithm}[H]
		\begin{algorithmic}[1]
			\Require $\{P_{i-2}, \ldots, P_{i+2}\}$
			\State $a_i= L_{P_{i-2},P_{i-1}} \wedge L_{P_i,P_{i+1}}$
			\State $b_i = L_{P_{i-1},P_i} \wedge L_{P_{i+1},P_{i+2}}$
			\State $c_i = L_{P_{i-2},P_{i+2}} \wedge L_{a_i,b_i}$
			\State $\br_{i} = L_{c_i,P_i}$
			\Ensure $\br_{i}$
		\end{algorithmic}
		\caption{Tangent line estimation in $P_i$ with the method \emph{ABFH}, proposed in \cite{Albrecht2}.}
		\label{alg:estima5pts}
	\end{Algorithm}
	
	The tangent lines at the other points can be estimated by changing the ordering.
	
	\begin{remark}
		\begin{itemize}
			\item[$\triangleright$] The computational cost of Algorithm \ref{alg:estima5pts} is {$66$ flops}, since it only requires the computation of the equations of $7$ lines joining $2$ points and to solve $3$ systems of $2$ linear equations (the intersection of $3$ pairs of such lines).
			\item[$\triangleright$] Compared to the method called \emph{Conic}, which is the method that consists in computing the implicit equation of the conic interpolating the points $\{P_{j},\,j = i-2, \ldots, i+2\}$ and calculating from this equation the tangent at one point \cite{Albrecht2},
			Algorithm \ref{alg:estima5pts} needs fewer computations. The computational cost of \emph{Conic} is {$164$ flops}, since it requires to solve a system of $5$ linear equations and to compute the equation of the tangent line to the conic at $P_i$.
		\end{itemize}	
	\end{remark}
	
	If we assume that the data $P_1, \ldots, P_n$ represent a closed convex polygon, then we estimate the tangent in each point $P_i$ by taking as input for the algorithm the points $P_{i-2}$, $P_ {i -1}$, $P_{i}$, $P_{i + 1}$, $P_{i + 2}$, where $P_{-1} = P_{n-1}$, $P_ {0} = P_{n}$, $P_{n + 1} = P_{1}$, and $P_{n + 2} = P_{2}$. On the other hand, if they represent an open convex polygon, the same procedure is followed except for the extreme points $P_{1}, P_{2}, P_{n-1}$, and $P_{n}$. To find $ r_1$ and $r_2$ , the points $P_1, P_2, P_3, P_4, P_5$ are reordered so that $P_1$ and $P_2$ are, according to the respective case, the third point in the input of the algorithm. Similarly, to find $ r_{n-1}$ and $ r_{n}$, $P_{n-4}, P_{n-3}, P_{n-2}, P_{ n-1}, P_n$ are reordered conveniently for each case.
	In the implementation, it is convenient to work with projective coordinates, so it makes sense to consider the point of intersection even for parallel lines (that intersect at an \emph{improper point}{, i.e., a point at \emph{infinity}}) and take advantage of the duality in the computation of $r_i \wedge r_j$ ($r_i, r_j$ straight lines) and $P_i \wedge P_j$ ($P_i, P_j$ points), having just one method for both operations.
	
	Since Algorithm \ref{alg:estima5pts} is based on finding intersections of lines, it is invariant under affine transformations. By assigning at each point the tangent of the conic that interpolates it together with its $4$ closest neighboring points, it can be concluded that the exact tangent directions are recovered if the data are sampled on a conic.
	
	\subsection{Convexity analysis of the data}
	\label{subsec:anal-de-conv}
	
	The method of estimating tangents using Algorithm~\ref{alg:estima5pts} may lead to erroneous results if the polygon $\mathcal{P} = \{P_{i}, i = 1, \ldots, n \}$ is non-convex \cite{Albrecht2}; see Fig. \ref{fig:pol_noconvex_gl}. The solution to this problem is to split $\mathcal{P}$ into convex sub-polygons. In this way, tangents can be estimated independently in each sub-polygon. This splitting can be done by means of two approaches.
	The \emph{first approach}, proposed in \cite{Albrecht:convexity}, consists in inserting an \emph{inflexion point}, $P$, at the midpoint of the inflection edge $P_{i} P_{i + 1}$, as shown in Fig. \ref{fig:pol_noconvex_gl}. Then, the left tangent direction, $\bt_{P, l}$, and the right tangent direction, $\bt_{P, r}$, at $P$ are estimated associated to the corresponding sub-polygons, that is to say, taking the points $\{P_{i-3}, \ldots, P_{i}, P\}$ and  $\{P, P_{i + 1}, \ldots, P_{i + 4}\}$, as entries for Algorithm \ref{alg:estima5pts}, respectively. Finally, the bisector of the acute angle between them is defined as tangent vector at $P$; see Fig. \ref{fig:pol_noconvex_gl}. This can be obtained as a convex linear combination of these two tangent vectors.

	\begin{figure}[ht]
		\centering
		\begin{tikzpicture}[scale=.8]	
			\foreach \x/\y [count=\i] in {0/1, 1/2.3, 3/2.5, 4.7/1.7, 5.3/.2, 7/0, 8.7/.7, 9.2/2.3}{
				\coordinate (\i) at (\x,\y);
			}
			
			\node[left] at (1) {$P_{i-3}$};
			\node[above left] at (2) {$P_{i-2}$};
			\node[above] at (3) {$P_{i-1}$};
			\node[below left] at (4) {$P_i$};
			\node[below] at (5) {$P_{i+1}$};
			\node[below] at (6) {$P_{i+2}$};
			\node[below right] at (7) {$P_{i+3}$};
			\node[right] at (8) {$P_{i+4}$};
			
			\foreach \tx/\ty [count=\i] in {-0.2016/0.0085, -0.2099/0.0520}{
				\coordinate (t\i) at ($1/veclen(\tx,\ty)*(\ty,-\tx)$);
			}
			\draw[red,thick] ($(4)!.5!(5) - (t1)$) -- +($2*(t1)$) node[black, above left=-3pt] {$\bt_{P,l}$};
			\draw[red,thick] ($(4)!.5!(5) - (t2)$) -- +($2*(t2)$) node[black,above right=-3pt] {$\bt_{P,r}$};
			\draw[densely dotted,thick] ($(4)!.5!(5) - .6*(t1)-.6*(t2)$) -- +($1.2*(t1)+1.2*(t2)$);
			
			\draw (1) \foreach \i in {2,...,8}{ -- (\i)};
			\draw[ultra thick,green!50!blue] (4) -- (5);
			
			\foreach \i in {1,...,8}{
				\shade[ball color = blue] (\i) circle (2pt);
			}
			\shade[ball color = red] ($(4)!.5!(5)$) circle (2pt) node[black,right] {$P$};
		\end{tikzpicture}
		\caption{Nonconvex polygon divided into two sub-polygon inserting the inflection point $P$. The assigned tangent is drawn with discontinuous lines.}
		\label{fig:pol_noconvex_gl}
	\end{figure}
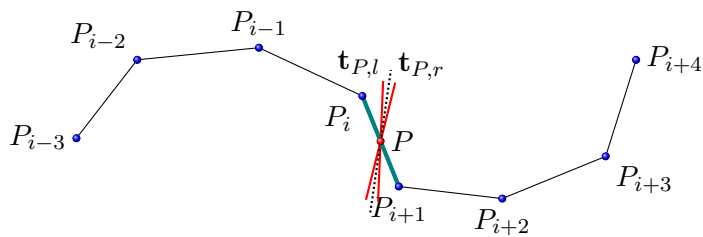
	
	To avoid inserting more data than the user proposes, a \emph{second approach} consists in dividing the initial polygon into independent convex sub-polygons, considering the inflection edge $\overline{P_i P_{i+1}}$ as belonging to the two sub-polygons that contain it as a final or initial edge, respectively; see Fig. \ref{fig:pol_noconvex_gl}. At each point, the tangent is estimated considering only the sub-polygon to which it belongs.
	
	Algorithm \ref{alg:subpol_convex} illustrates the proposal, with $R_{\frac{\pi}{2}}$ denoting a rotation of $\frac\pi2$ radians, $\langle\cdot,\cdot\rangle$ the usual scalar product, and $sign(x) = \frac{\lvert x \rvert}{x}$ the sign function (discarding the case $x=0$, as there are not three consecutive collinear points in $\mathcal{P}$).
	
	\begin{Algorithm}[H]
		\begin{algorithmic}[1]
			\Require $\mathcal{P}=\{P_{i}, i = 1, \ldots, n\}$
			\State $j = 1$;
			\While{($\lvert\mathcal{P}\rvert\geq 3$)} \Comment{The smallest amount of vertices in a sub-polygon is 3.}
			\State $\mathcal{PC}_j(1:3) = \mathcal{P}(1:3) $;
			\State $s = \,sign(\,\langle P_1 - P_2,R_{\frac{\pi}{2}}(P_3 - P_2) \rangle\,)$;
			\State $i = 3$;
			\State $n = \lvert \mathcal{P} \rvert$;
			\While{($\,i < n \,$) \AND ($\,sign(\,\langle P_{i-1} - P_i,R_{\frac{\pi}{2}}(P_{i+1} - P_i) \rangle\,) == s$)}
			\State $\mathcal{PC}_j(i+1) = \mathcal{P}(i+1)$;
			\State $i = i+1$;
			\EndWhile
			\State $\mathcal{P} = \mathcal{P}(i-1:n)$;
			\State $j = j+1$;
			\EndWhile
			\Ensure $\{\mathcal{PC}_1,\mathcal{PC}_2, \ldots,\mathcal{PC}_j\}$
			\end
			{algorithmic}
			\caption{Splitting the initial polygon $\mathcal{P}$ into convex sub-polygons $\{\mathcal{PC}_j\}$ (second approach).}
			\label{alg:subpol_convex}
		\end{Algorithm}

		There are cases in which there is an insufficient amount of input data for the Algorithm \ref{alg:estima5pts}, that is, there may be sub-polygons with less than $5$ vertices. To solve this, in \cite{Estrada-Diaz} the strategy followed by Albrecht et al. has been generalized, collapsing at a certain point a pair of points (selected in a convenient way) and considering as a straight line joining the points of each pair, a tangent assigned a priori by the designer. This method is also based on finding solutions of systems of $2$ linear equations of and does not require the calculation of the implicit equation of the conic.
		
		Only two cases considered in \cite{Diaz-Estrada} should be additionally considered: when a sub-polygon has $3$ or $4$ points. The case of $2$ points is discarded, because the convexity is analyzed in polygons of at least $3$ points. Alternatively, in \cite{Albrecht:convexity} a way of inserting new points in the polygon is proposed, so that each sub-polygon has at least the $5$ points necessary for the estimation of the tangents. This is a rule for automatic point insertion and it is not dependent on the design. It differentiates the cases of $3$ and $4$ points and ensures that the convexity of the sub-polygon with the new points remains unaffected.

		\section{Curvature computation}
		\label{sec:curvcomp}
		
		In this section we show how to efficiently estimate the curvature of a smooth curve $\bc$ at a sample of points $\{P_j\}$ using the set $\{ {r}_j \}$ of tangent lines associated to $\{ P_j \}$ by ABFH.
		
		Based on a general theorem of E. Cartan \cite{Car35}, which states that two curves are related by a group transformation if and only if their signature curves are identical, in \cite{Ca98}  the differential invariant signature curve paradigm was introduced for the invariant recognition of visual objects. In visual applications, a group transformation $G$ (typically either the Euclidean, affine, similarity, or projective group) acts on a space $E$ representing the image space, whose subsets are the objects of interest. A differential invariant $I$ of $G$ is a real-valued function, depending on the curve and its derivatives at a point, which is unaffected by the action of $G$. Consequently, to construct a numerical approximation to a differential invariant $I$, that approximation should be computed using appropriate combinations of the coordinates of the sample points. This idea draws a bridge between the discrete and continuous invariant theory. The first example discussed in \cite{Ca98} are the Euclidean plane curves (where the simplest differential invariant of the Euclidean group is the Euclidean curvature) and the $3$-\emph{points} invariant numerical approximations to the Euclidean curvature.
		
		For any three noncollinear points in the plane $P_1, P_2$, and $P_3$, let us denote by $A[P_1P_2P_3]$ the signed area of $\triangle P_1P_2P_3$, i.e., $A[P_1P_2P_3]= \frac{(P_i-P_j) \wedge (P_i-P_k)}{2} $ (the area is positive if the triangle is traversed in a clockwise direction).
		The quantity $A[P_1P_2P_3]$ is the simplest invariant under the action of the special affine group $SA(2)$ consisting of all area-preserving affine transformations of the plane. According to \cite{We46}, every joint affine invariant $I (P_1, \ldots, P_n)$ depending on the $n$ points $P_i$ is a function of these triangular areas.
		It its well known that five points $P_0, \ldots,P_4$ in general position in the plane determine a unique
		conic section that passes through them. The implicit formula for the $5$-\emph{points} interpolating conic in terms of the invariants $A[P_iP_jP_k]$ is a classical result that may be found in \cite{Ca98,St93}. In a similar way, given three points $P_i,P_j,P_k$ and tangent lines associated to two of them, we can establish the $SA(2)$-invariant form of the unique conic interpolating these points and the associated tangent directions.
		
		\begin{lemma} \label{lem:invariant}
			Let $P_i,P_j,P_k$ be three points in general position in the plane and let ${r}_i$, ${r}_k$ be the tangent lines associated to $P_i,P_k, i\neq k$, respectively. Let $Q=r_i \bigwedge r_k$. Then, the unique conic $\mathcal{C}$ interpolating $P_i,P_j,P_k$ as well as the tangent directions associated to $P_i$, $P_k$, $i\neq k$, satisfies the quadratic $SA(2)$-invariant implicit equation
			\begin{equation}
				A[\mathbf{x}P_iP_k]^2 A[P_jQP_i] A[P_jP_kQ] = A[P_jP_iP_k]^2 A[\mathbf{x}P_kQ] A[\mathbf{x}QP_i],
				\label{ecuacion}
			\end{equation}
			where $\mathbf{x}=(x,y)$ is an arbitrary point on $\mathcal{C}$.\\
			Moreover, if $\triangle P_iP_kQ$ is nondegenerate, the curvature of the unique conic $\mathcal{C}$ at $P_i$, $\kappa(P_i)$,
			% interpolating $P_0,P_1,P_2$ as well as the tangent directions associated to $P_i,P_k, i\neq k$
			has the $SE(2)$-invariant formula
			\begin{equation}
				\kappa(P_i) = 4 \, \frac{A[P_iP_kQ] A[P_jP_kQ] A[P_jQP_i]}{{A[ P_jP_iP_k]}^2 \, {\|Q-P_i\|}^3}.
				\label{ecuacion2}
			\end{equation}
		\end{lemma}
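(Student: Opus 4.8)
The plan is to realize $\mathcal{C}$ as a rational quadratic B\'ezier curve and then read off both assertions directly from its control data. Since $Q = r_i \wedge r_k$ lies on each tangent line, we have $r_i = L_{P_i,Q}$ and $r_k = L_{P_k,Q}$; hence the sought conic is tangent to the edges $\overline{P_iQ}$ and $\overline{QP_k}$ at its endpoints $P_i$ and $P_k$, and it passes through $P_j$. A conic has five degrees of freedom, and prescribing a point together with the tangent line there imposes two conditions, so the data $(P_i,r_i)$, $(P_k,r_k)$, $P_j$ amount to $2+2+1=5$ conditions and single out a unique conic. This conic admits the Bernstein--B\'ezier parametrization
\begin{equation}
\bc(t) = \frac{(1-t)^2 P_i + 2w\,t(1-t)\,Q + t^2 P_k}{(1-t)^2 + 2w\,t(1-t) + t^2},
\label{aux-bezier}
\end{equation}
with control polygon $P_i,Q,P_k$ and a single weight $w>0$ to be fixed by the requirement $P_j\in\mathcal{C}$.

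First I would establish the implicit equation \eqref{ecuacion}. Working in barycentric coordinates $(\alpha,\beta,\gamma)$ with respect to the control triangle $\triangle P_iQP_k$, the coordinates of $\bc(t)$ are proportional to $\big((1-t)^2,\,2w\,t(1-t),\,t^2\big)$, so the conic satisfies $\beta^2 = 4w^2\,\alpha\gamma$. Each barycentric coordinate is a ratio of signed areas, for instance $\alpha = A[\mathbf{x}QP_k]/A[P_iQP_k]$, and substituting these ratios while using that $A[\cdots]$ is alternating in its arguments turns the barycentric identity into
\begin{equation}
A[\mathbf{x}P_iP_k]^2 = 4w^2\,A[\mathbf{x}P_kQ]\,A[\mathbf{x}QP_i].
\label{aux-implicit}
\end{equation}
Evaluating \eqref{aux-implicit} at the on-curve point $\mathbf{x}=P_j$ determines the weight,
\begin{equation}
w^2 = \frac{A[P_jP_iP_k]^2}{4\,A[P_jP_kQ]\,A[P_jQP_i]},
\label{aux-weight}
\end{equation}
and eliminating $w^2$ between \eqref{aux-implicit} and \eqref{aux-weight} reproduces exactly \eqref{ecuacion}.

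For the curvature \eqref{ecuacion2} I would differentiate \eqref{aux-bezier} at the endpoint $t=0$, which corresponds to $P_i$. A direct computation gives $\bc'(0)=2w\,(Q-P_i)$ and $\bc''(0)=(4w-8w^2)(Q-P_i)+2(P_k-P_i)$; since the first summand of $\bc''(0)$ is parallel to $\bc'(0)$, one finds $\bc'(0)\wedge\bc''(0)=8w\,A[P_iQP_k]$ while $\|\bc'(0)\|=2w\,\|Q-P_i\|$. The planar curvature formula $\kappa=|\bc'\wedge\bc''|/\|\bc'\|^3$ then collapses to
\begin{equation}
\kappa(P_i)=\frac{|A[P_iQP_k]|}{w^2\,\|Q-P_i\|^3},
\label{aux-kappa}
\end{equation}
and inserting the weight \eqref{aux-weight} yields \eqref{ecuacion2}. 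The $SE(2)$-invariance is then immediate, because Euclidean motions preserve both the length $\|Q-P_i\|$ and the (unsigned) triangle areas occurring in \eqref{ecuacion2}.

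The two derivatives of \eqref{aux-bezier} are routine; the only genuine care needed is the bookkeeping of orientations of the signed areas, so that the alternating-sign identities used to pass from the barycentric form to \eqref{aux-implicit}, and from \eqref{aux-kappa} to \eqref{ecuacion2}, are applied consistently and produce the stated product without a spurious sign. I expect this sign tracking, rather than any deep step, to be the main obstacle. Finally, the nondegeneracy hypothesis on $\triangle P_iP_kQ$ is precisely what guarantees $A[P_iQP_k]\neq 0$ and $\|Q-P_i\|\neq 0$, equivalently $0<w<\infty$, so that \eqref{aux-weight} and \eqref{aux-kappa} are well defined.
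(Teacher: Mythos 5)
Your proposal is correct and follows essentially the same route as the paper's proof: barycentric coordinates with respect to the control triangle $\triangle P_iQP_k$, the implicit form $v^2=4\Omega^2 uw$ of the rational B\'ezier conic, determination of the weight by evaluating at $P_j$, and the endpoint curvature formula for a conic in Bernstein--B\'ezier form. The only cosmetic difference is that you derive that endpoint curvature formula by differentiating the parametrization at $t=0$, whereas the paper simply cites it as a known result from Farin.
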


		\begin{proof}
			We write $\mathbf{x}=(x,y)$ and $P_{j}$ in barycentric coordinates with respect to $\triangle P_{i}QP_k$ ,
			\begin{equation*}
				\mathbf{x} = (x,y) = u P_{i} + v Q + w P_{k}
				\quad \text{and} \quad
				P_{j} = u_j P_{i} + v_j Q + w_j P_{k},
			\end{equation*}
			with $u+v+w = u_j + v_j + w_j = 1$. Since $\mathbf{x}=(x,y)$ is an arbitrary point on $\mathcal{C}$, its barycentric coordinates satisfy the implicit equation
			\begin{equation}
				v^2=4\, \Omega^2\, u \,w,
				\label{df2}
			\end{equation}
			with
			\begin{equation}
				\Omega^2=\frac{v_j^2}{4 \, u_j\,w_j}.
				\label{Omega2}
			\end{equation}
			
			Recall the geometric interpretation of the barycentric coordinates with respect to $\triangle P_{i}QP_k $
			\begin{equation}\label{u}
				\begin{split}
					&u=\frac{A[\mathbf{x}P_k Q ]}{A[P_i P_k Q]}, \quad v=\frac{A[\mathbf{x}P_i P_k]}{A[P_i P_k Q]}, \quad w=\frac{A[\mathbf{x}Q P_i ]}{A[P_i P_k Q]}, \\
					&u_j=\frac{A[P_j P_k Q ]}{A[P_i P_k Q]}, \quad
					v_j=\frac{A[P_j P_i P_k]}{A[P_i P_k Q]}, \quad\text{and}\quad
					w_j=\frac{A[P_j Q P_i ]}{A[P_i P_k Q]}.
				\end{split}
			\end{equation}

			Substituting \eqref{u} in \eqref{df2}, we obtain the $SA(2)$-invariant implicit equation \eqref{ecuacion} of the unique conic $\mathcal{C}$ determined by the 4-\emph{points} $P_i,P_j,P_k$ and $Q$.
			
			Given a rational conic $\mathcal{C}$ in Bernstein-B\'ezier form with control polygon $P_i, Q, P_k$ and parameter
			$\Omega$, % written in barycentric coordinates with respect to its control polygon,
			then the well known formula of the curvature of $\mathcal{C}$ at $P_i$, $\kappa(P_i)$, is \cite{Farin}
			\begin{equation}
				\kappa(P_i)= \frac{{A[P_iP_kQ]}}{{\Omega}^2\,{\|Q-P_i\|}^3}.
				\label{ecuacion1}
			\end{equation}
			
			Substituting the expressions for $u_j,v_j$ and $w_j$ from \eqref{u} in \eqref{Omega2}, according to \eqref{ecuacion1}
			we obtain the $SE(2)$-invariant formula \eqref{ecuacion2}.
		\end{proof}
		
		Given a sample of points $\{P_j\}$ on a smooth curve $\bc$ and the set $\{ {r}_j \}$ of tangent lines associated to $\{ P_j \}$ by means of ABFH, we denote by $\bc_{i-1,r}$ the unique conic interpolating the points $P_j,\,j=i-1,i,i+1$ and the tangent vectors $\tau_j$ at $P_j$ corresponding to $r_j, \, j=i,i+1$. Analogously, we denote by $\bc_{i,l}$ the unique conic interpolating the points $P_j,\,j=i-1,i,i+1$ and the tangent vectors at $P_j$, $\tau_j, \, j=i-1,i$. To each point $P_i$ are associated two auxiliary points $Q_j= r_{j+1}\bigwedge r_j,\,j=i-1,i$.
		
		According to the previous Lemma \ref{lem:invariant}, in order to compute the curvatures at $P_i$ of the interpolating conics $\bc_{i,l}$ and $\bc_{i-1,r}$ it is only necessary to compute the auxiliary points $Q_{i-1}$, $Q_i$, the distances $\|Q_{i-1}-P_i\|$, $\|Q_{i}-P_i\|$ and the areas $A[ P_{i-1}P_iP_{i+1}]$, $A[P_iP_{i+1}Q_{i-1}]$, $A[P_{i-1}P_{i+1}Q_{i-1}]$, $A[P_{i-1}Q_{i-1}P_i]$, $A[P_iP_{i-1}Q_{i}]$, $A[P_{i+1}P_{i-1}Q_{i}]$, $A[P_{i+1}Q_{i}P_i]$.

		\begin{definition} Let $\kappa_{i,l}(P_i)$ and $\kappa_{i-1,r}(P_i)$ be the curvature values of the interpolating conics $\bc_{i,l}$ and $\bc_{i-1,r}$ at $P_i$. Our estimator $\widetilde{\kappa}(P_i)$ for the curvature of the curve $\bc$ at $P_i$ is defined as
			\begin{equation}
				\widetilde{\kappa}(P_i) = \frac{\kappa_{i,l}(P_i) + \kappa_{i-1,r}(P_i)}{2}.
				\label{konda}
			\end{equation}
		\end{definition}
		
		\begin{Algorithm}[H]
			\begin{algorithmic}[1] \onehalfspacing
				\Require $j,\,\, \mathcal{P}(j-3, \ldots,j+3)$
				\For {$i=j-1,j,j+1$}
				\State $r_i=\texttt{Algorithm}\,\ref{alg:estima5pts}\,(P_{i-2}, \ldots,P_{i+2})$;
				\EndFor
				\State $Q_{j-1}= r_{j-1} \bigwedge r_j$;
				\State $ Q_{j}= r_{j} \bigwedge r_{j+1}$;
				\State Compute $A[ P_{j-1}P_jP_{j+1}]$;
				\State Compute$A[P_jP_{j+1}Q_{j-1}]$, $A[P_{j-1}P_{j+1}Q_{j-1}]$, $A[P_{j-1}Q_{j-1}P_j]$, and $\|Q_{j-1}-P_j\|$; \vskip 3pt
				
				\State $\kappa_{l}(P_j)= 4 \, \frac{A[P_jP_{j+1}Q_{j-1}]A[P_{j-1}P_{j+1}Q_{j-1}]A[P_{j-1}Q_{j-1}P_j] }{{A[ P_{j-1}P_jP_{j+1}] }^2 \, {\|Q_{j-1}-P_j\| }^3}$ ; \vskip 3pt
				
				\State Compute $A[P_jP_{j+1}Q_{j}]$, $A[P_{j-1}P_{j+1}Q_{j}]$, $A[P_{j-1}P_jQ_{j}]$, and $\|Q_{j}-P_j\|$; \vskip 3pt
				
				\State $\kappa_{r}(P_j)= 4 \, \frac{A[P_jP_{j+1}Q_{j}]A[P_{j-1}P_{j+1}Q_{j}]A[P_{j-1}P_jQ_{j}]}{{A[ P_{j-1}P_jP_{j+1}] }^2 \, {\|Q_{j}-P_j\| }^3}$; \vskip 3pt
				\State $\widetilde{\kappa}(P_j)=\frac{\kappa_{l}(P_j)+\kappa_{r}(P_j)}{2}$;
				\Ensure $\widetilde{\kappa}(P_j)$
				\end
				{algorithmic}
				\caption{Curvature estimation $\widetilde{\kappa}(P_j)$ with \emph{ConicCurv} method.}
				\label{alg:AlgCurvature}
			\end{Algorithm}

			The pseudocode in Algorithm~\ref{alg:AlgCurvature} is a sketch of an algorithm to estimate curvatures with \emph{ConicCurv}. For any sample of n points, the computational cost of data preprocessing and ConicCurv is clearly $O(n)$. 
			{In particular, for one run, (i.e., estimating the curvature in the center point of 7 ordered points), the computational cost of Algorithm~\ref{alg:AlgCurvature} is $304$ flops.}
			An efficient implementation should reuse the intermediate computations of the estimations of tangent lines and curvatures at the previous points obtained with ABFH and \emph{ConicCurv}.
			
			\begin{figure}[h!]
				\centering
				\subcaptionbox{$\bc_{i-1,r}$}[.45\textwidth][c]{
					\begin{tikzpicture}[scale=.7]	
						\foreach \x/\y [count=\i] in {0/0,3/3,7/0}{
							\coordinate (\i) at (\x,\y);
						}
						\foreach \x/\y [count=\i] in {1/6,4/1,-1/4}{
							\coordinate (t\i) at ($1/veclen(\x,\y)*(\x,\y)$);
						}
						
						\node[below right] at (1) {$P_{i-1}$};
						\node[below=1mm] at (2) {$P_i$};
						\node[below left] at (3) {$P_{i+1}$};
						
						\coordinate (q2) at (6.0588,3.7647); % intersection of t2 y t3
						
						\draw[dashed] (1) -- (2) -- (3);
						\draw (2) -- (q2) -- (3);
						
						\draw[domain=-1.2:1,variable=\t,samples=100,black,densely dotted] let \p1 = (2),\p2 = (q2),\p3 = (3) in \parambernbez{\p1}{\p2}{\p3}{.779};

						\foreach \i in {2,3}{ % tangents
							\draw[red,very thick] ($(\i) - .5*(t\i)$) -- +(t\i);
						}
						\foreach \c in {1,2,3}
						\shade[ball color = blue] (\c) circle (2pt);
						\shade[ball color = red] (q2) circle (2pt) node[above,black] {$Q_{i}$};	
					\end{tikzpicture}
				}
				\hfill
				\subcaptionbox{$\bc_{i,l}$}[.45\textwidth][c]{
					\begin{tikzpicture}[scale=.7]	
						\foreach \x/\y [count=\i] in {0/0,3/3,7/0}{
							\coordinate (\i) at (\x,\y);
						}
						\foreach \x/\y [count=\i] in {1/6,4/1,-1/4}{
							\coordinate (t\i) at ($1/veclen(\x,\y)*(\x,\y)$);
						}
						
						\node[below right] at (1) {$P_{i-1}$};
						\node[below=1mm] at (2) {$P_i$};
						\node[below left] at (3) {$P_{i+1}$};
						
						\coordinate (q1) at (0.3913, 2.3478); % intersection of t1 y t2
						
						\draw[dashed] (1) -- (2) -- (3);
						\draw (1) -- (q1) -- (2);% -- (q2) -- (3);
						
						\draw[domain=0:5.7,variable=\t,samples=100,black,densely dotted] let \p1 = (1),\p2 = (q1),\p3 = (2) in \parambernbez{\p1}{\p2}{\p3}{.8016};	
						
						\foreach \i in {1,2}{ % tangentes
							\draw[red,very thick] ($(\i) - .5*(t\i)$) -- +(t\i);
						}
						\foreach \c in {1,2,3}
						\shade[ball color = blue] (\c) circle (2pt);
						\shade[ball color = red] (q1) circle (2pt) node[above,black] {$Q_{i-1}$};
					\end{tikzpicture}
				}
				\caption{Interpolating conics.}
				\label{fig:interConics}
			\end{figure}
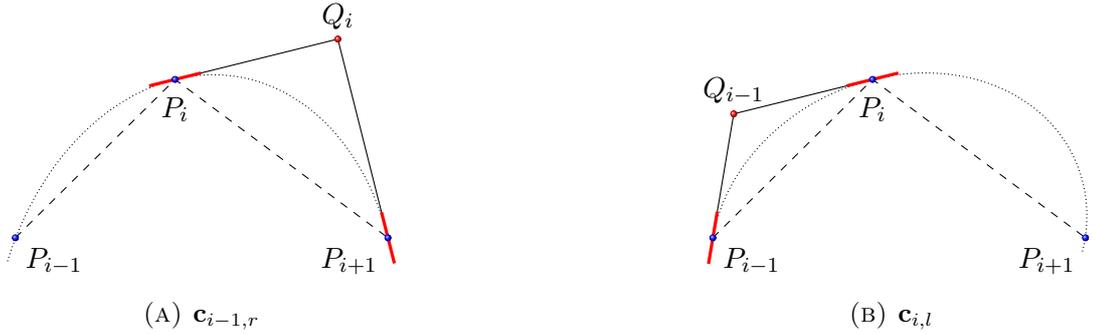
			
			\section{Curvature approximation order}
			\label{sec:apporder}
			
			Now, we show that, given a discrete sample $\{P_i\}$ of points on a sufficiently differentiable parametric curve $\bc(s)$, the average of the curvatures at $P_i$ of the interpolating conics $\bc_{i,l}$ and $\bc_{i-1,r}$, i.e., $\widetilde{\kappa}(P_i)$, provides a good numerical approximation to the curvature of $\bc$ at the point $P_i$.

			Let $P_{j}=\bc(s_j)$ and $\bt_j$ be the unitary tangent vector of $\bc(s)$ at $P_j$. % and $\bt_j$ the unitary tangent vector of the curve $\bc(s)$ at $P_j$.
			Our extension of ABFH exposed in Section \ref{subsec:anal-de-conv} provides \sefue{unitary} vectors $\btau_j$, that are good approximations to $\bt_j$, in the following sense:
			%using only the coordinates of the set of consecutive points on $\bc(s)$,
			the \sefue{unitary} tangent vector $\tau_j$ corresponding to the tangent line assigned by ABFH to $ P_j$ satisfies
			\begin{equation}
				\btau_j = \bt_j + O\left(h^4\right) \, \bn_j, \,\,{ \text{as}\; h \rightarrow 0,} 
				%, \,j=i-1,i,i+1, \quad\mbox{for} \quad h \rightarrow 0
				\label{O4}
			\end{equation}
			where $h=\max_{\lvert k-j \rvert \leq 2}\,\{ \lvert s_k - s_j \rvert\}$ and $\bn_j$ is the unitary normal vector of $\bc(s)$ at $P_j$ \cite{Albrecht2}.
			
			\begin{theorem}
				Let $\bc(s)$ be a parametric curve, $C^3$-differentiable in a neighborhood of $P_i=\bc(s_i)$ and denote by $\kappa(P_i)$ the curvature of $\bc$ at $P_i$. Then, for $h \rightarrow 0$, the curvature estimator $\widetilde{\kappa}(P_i)$ defined by \eqref{konda} satisfies
				\begin{equation*}
					\kappa(P_i)- \widetilde{\kappa}(P_i)=O(h^3).
				\end{equation*}
				Moreover, if the points $P_{i-1}$, $P_i$, and $P_{i+1}$, are \textit{uniformly} arc-length distributed, it holds
				\begin{equation*}
					\kappa(P_i)- \widetilde{\kappa}(P_i)=O\left(h^4\right).
				\end{equation*}
			\end{theorem}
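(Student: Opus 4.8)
The plan is to work in the Frenet frame of $\bc$ at $P_i$, placing $P_i$ at the origin with $\bt_i$ along the first axis and $\bn_i$ along the second, and to expand everything as a power series in the two arc-length gaps $\alpha := s_i - s_{i-1}$ and $\beta := s_{i+1}-s_i$, both of size $O(h)$. Writing $\kappa:=\kappa(P_i)$ and using the Frenet equations, the curve has the local expansion $\bc(s_i+t) = (t - \tfrac{\kappa^2}{6}t^3+\cdots)\,\bt_i + (\tfrac{\kappa}{2}t^2 + \tfrac{\kappa'}{6}t^3+\cdots)\,\bn_i$, which furnishes expansions for $P_{i-1},P_{i+1}$ and for the exact tangent lines at the three points. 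First I would use \eqref{O4} to replace the ABFH directions $\btau_j$ by the exact tangents $\bt_j$: since the two differ by $O(h^4)$ in the normal direction, and the curvature at $P_i$ of an interpolating conic changes by $O(h^4)/h=O(h^3)$ under an $O(h^4)$ rotation of a prescribed tangent, this substitution perturbs each of $\kappa_{i,l}(P_i)$ and $\kappa_{i-1,r}(P_i)$ by $O(h^3)$. For the sharper uniform estimate I would further check, using the precise form of \eqref{O4} from \cite{Albrecht2} and the reflection symmetry below, that the perturbation coming from the \emph{shared} tangent at $P_i$ enters the two conics with opposite sign and so cancels in the average \eqref{konda}, while the two outer tangent errors combine to $O(h^4)$.

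With exact tangents in place, the next step is to compute the auxiliary points $Q_{i-1}=r_{i-1}\wedge r_i$ and $Q_i=r_i\wedge r_{i+1}$, the distances $\|Q_{i-1}-P_i\|,\|Q_i-P_i\|$, and the seven signed areas of Algorithm~\ref{alg:AlgCurvature}, each as a series in $\alpha,\beta$, and to substitute them into the $SE(2)$-invariant formula \eqref{ecuacion2} of Lemma~\ref{lem:invariant}. This expresses $\kappa_{i,l}(P_i)$ as $\kappa$ times a product of factors $(1+e_k)$, $e_k=O(h)$; a direct check shows the first-order corrections sum to zero, so $\kappa_{i,l}(P_i)=\kappa+O(h^2)$, and likewise for $\kappa_{i-1,r}(P_i)$. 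To pin down the remaining terms without brute force I would invoke the reproduction property of \emph{ConicCurv}: since the estimator is \emph{exact} on data sampled from a conic, every surviving error term must be proportional to a quantity that vanishes on conics. Because a conic through $P_i$ with prescribed tangent there still has $\kappa,\kappa',\kappa''$ as free parameters (the conic constraint being a higher-order differential relation), exactness on all such conics forces the coefficients of $\kappa''$, $\kappa^3$, $\kappa\kappa'$ and $\kappa'^2/\kappa$ in the $O(h^2)$ term to vanish identically. Hence each one-sided estimator is in fact third order, $\kappa_{i,l}(P_i)-\kappa=O(h^3)$ and $\kappa_{i-1,r}(P_i)-\kappa=O(h^3)$, which already yields $\kappa(P_i)-\widetilde{\kappa}(P_i)=O(h^3)$.

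For the improvement to $O(h^4)$ under uniform sampling I would exploit the involution given by the reflection $x\mapsto -x$ combined with orientation reversal, under which $\kappa\mapsto\kappa$, $\kappa'\mapsto-\kappa'$, $\kappa''\mapsto\kappa''$, the two neighbours are interchanged (so $\alpha\leftrightarrow\beta$), and $\kappa_{i,l}(P_i)\leftrightarrow\kappa_{i-1,r}(P_i)$. Thus $\widetilde{\kappa}(P_i)$ is invariant under this involution, and its leading residual is the symmetrization of the common $O(h^3)$ term. The cleanest way to see the vanishing is the local-graph viewpoint: reading each conic as a graph $y=g(x)$ with $g'(0)=O(h^4)$, one has $\kappa_{i,l}(P_i)-\kappa=g_l''(0)-f''(0)+O(h^8)$, and the left and right conics have double contact at $\{P_{i-1},P_i\}$ and $\{P_i,P_{i+1}\}$ respectively, so the leading parts of $g_l''(0)-f''(0)$ and $g_r''(0)-f''(0)$ are governed by the node polynomials $x^2(x-x_-)^2(x-x_+)$ and $x^2(x-x_+)^2(x-x_-)$. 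Their second derivatives at $0$ sum to $-2x_-x_+(x_-+x_+)\propto\alpha\beta(\alpha-\beta)$, which vanishes exactly when $\alpha=\beta$; hence $\kappa(P_i)-\widetilde{\kappa}(P_i)=O(h^4)$ for uniformly arc-length distributed points.

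The main obstacle is twofold. Algebraically, it is the bookkeeping of the $O(h^3)$ (and, for the uniform case, $O(h^4)$) terms of the seven areas and two distances and their cancellation after substitution into \eqref{ecuacion2}; I expect to tame this with the reproduction property and the reflection symmetry, reducing the decisive verification to the antisymmetry in $(\alpha,\beta)$ of a single cubic form, rather than expanding blindly. Analytically, the delicate point is regularity: the naive expansion of the $O(h^3)$ term involves $\kappa''$ (and, in the uniform analysis, $\kappa'''$), which need not exist under the hypothesis $\bc\in C^3$. I would therefore structure the argument so that the $h^0,h^1,h^2$ contributions are annihilated by the conic-reproduction identities, which are algebraic and independent of smoothness, while the surviving $O(h^3)$ (resp. $O(h^4)$) bound is read off from the third-order Taylor expansion of $\bc$ with its Peano remainder, so that no derivative beyond the third is ever genuinely required.
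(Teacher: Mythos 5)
Your plan is a genuinely different route from the paper's. The paper never passes to a local graph, divided differences, or a perturbation argument: it arc-length reparametrizes the three curves $\bc$, $\bc_{i,l}$, $\bc_{i-1,r}$ so that they coincide at the parameter values $-s_{i-1},0,s_{i+1}$, Taylor-expands all three to third order at $s=0$, and uses the two vector identities $2\bc(-s_{i-1})-\bc_{i,l}(-s_{i-1})-\bc_{i-1,r}(-s_{i-1})=0$ and $2\bc(s_{i+1})-\bc_{i,l}(s_{i+1})-\bc_{i-1,r}(s_{i+1})=0$ to \emph{eliminate} the unknown combination $2\bc'''(0)-\bc_{i,l}'''(0)-\bc_{i-1,r}'''(0)$; the normal component of what remains is \eqref{Norma}, in which the only displayed error source is the shared ABFH tangent at $P_i$ from \eqref{O4}, carrying the antisymmetric factor $(s_{i-1}-s_{i+1})/(s_{i-1}s_{i+1})$. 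In particular the tangent conditions at the outer points $P_{i-1},P_{i+1}$ are never invoked: they affect only the eliminated third derivatives and the discarded $O(s^4)$ remainders. Your decomposition --- (i) trade the ABFH tangents for exact ones at cost $O(h^4)/h=O(h^3)$ per conic, (ii) bound the exact-tangent conic error via conic reproduction (conics realize all triples $(\kappa,\kappa',\kappa'')$, so the $h^0,h^1,h^2$ coefficients vanish identically) and the node polynomials, (iii) symmetrize --- is sound in its main computations: the $O(\delta/h)$ sensitivity is correct, and the sum of the two node polynomials' second derivatives, $-2x_-x_+(x_-+x_+)\propto\alpha\beta(\alpha-\beta)$, is exactly the antisymmetry that drives the paper's \eqref{Norma}. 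What your version buys is transparency: the conic-versus-curve deviation that you isolate in (ii) is hidden in the paper inside Taylor remainders that are dropped without comment.

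The genuine gap is in step (i), in the uniform case. Your reflection involution correctly kills the contribution of the \emph{shared} tangent error at $P_i$ when $\alpha=\beta$, but the two \emph{outer} tangent errors do not cancel on the strength of \eqref{O4} alone. Take $\bc$ to be a parabola (so that by conic precision every other error source vanishes) with uniform gaps $\alpha=\beta=h$, exact tangent at $P_i$, and ABFH angular errors $\delta_-$ at $P_{i-1}$ and $\delta_+$ at $P_{i+1}$. Linearizing the two interpolation problems in $(\delta_-,\delta_+)$ shows the averaged estimator is perturbed by
\begin{equation*}
\frac{\delta_-\beta^2-\delta_+\alpha^2}{\alpha\beta(\alpha+\beta)}\Big|_{\alpha=\beta=h}=\frac{\delta_--\delta_+}{2h},
\end{equation*}
which is $O(h^4)/h=O(h^3)$, not $O(h^4)$, unless $\delta_--\delta_+=O(h^5)$. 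Equation \eqref{O4} bounds each error by $O(h^4)$ but says nothing about their difference, so your assertion that ``the two outer tangent errors combine to $O(h^4)$'' requires an additional structural property of ABFH (that the $h^4$-coefficient of its error varies by $O(h)$ between consecutive uniformly spaced points), to be extracted from \cite{Albrecht2}; you flag this, but as written it is the one step that would fail. For fairness: the paper does not resolve this either --- since its elimination never uses the outer tangents, their influence re-enters through the conics' fourth derivatives inside the $O(s^4)$ remainders that \eqref{Norma} silently discards, and the same parabola example shows those remainders are not negligible at order $h^3$. A second, milder caveat, which you partially anticipate, is regularity: your divided-difference constants (a fifth-order quantity) and the paper's $O(s^4)$ remainders both require more smoothness than the stated $C^3$, so the Peano-remainder repair you sketch can salvage the $O(h^3)$ claim only in a weakened $o$-form and cannot by itself yield the $O(h^4)$ statement.
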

			
			\begin{proof}
				
				{Let be $P_{j}=\bc(s_j), \, j=1,2,\ldots$ points on the curve $\bc(s)$, $\bt_j$ the unitary tangent vectors of $\bc(s)$ at $P_j$, and $\btau_j$ the vectors associated to the points $P_j$ by our extension of ABFH exposed in Section \ref{subsec:anal-de-conv}. If $h\max_{\lvert k-j \rvert \leq 2} \, \{ \lvert s_k - s_j \rvert\}$}
				is very small, for any fixed index $i$ we may assume that for $j\in\{i,i+1\}$ the arc-length from $P_{j-1}$ to $P_j$ on the three curves $\bc, \bc_{i,l}$ and $\bc_{i-1,r}$ are equal; see Fig. \ref{fig:interConics}. Hence, we may arc-length reparametrize $\bc$, $\bc_{i,l}$ and $\bc_{i-1,r}$, in such a way that it holds
				\begin{align}
					\bc_{i-1,r}(-s_{i-1}) &= \bc(-s_{i-1})=\bc_{i,l}(-s_{i-1}) = P_{i-1}, \label{eq:conicsparL} \\
					\bc_{i-1,r}(0) &= \bc(0) = \bc_{i,l}(0) = P_{i}, \label{eq:conicsZero} \\
					\bc_{i-1,r}(s_{i+1}) &= \bc(s_{i+1})=\bc_{i,l}(s_{i+1}) = P_{i+1}, \label{eq:conicsparR}
				\end{align}
				where $s_i = 0$.
				
				Recall that for any arc-length parameterized function $\br(s)$, if it is smooth at $s=0$ and, denoting by $\,\, '$ the differentiation with respect to the arc-length $s$, then $\bt(0)=\br^{'}(0)$ and $\bn(0)={\bt(0)}^{\bot}$ is the Frenet basis at $s=0$, where $\mathbf{v}^{\bot}$ denotes the vector orthogonal to vector $\mathbf{v}$. According to the Frenet formulas it holds
				\begin{equation*}
					\br^{''}(0)=\bt^{'}(0)=\kappa(0)\bn(0) \quad \text{and} \quad \bn^{'}(0)=-\kappa(0)\bt(0).
				\end{equation*}
				
				Both interpolating conic curves $\bc_{i,l}$ and $\bc_{i-1,r}$ have the same \sefue{unitary} tangent vector at $P_i$, $\btau_i $. Therefore, substituting $\bt_i=\bc^{'}(0) $ in \eqref{O4} we have
				\begin{equation}
					\bc_{i,l}^{'}(0) = \bc_{i-1,r}^{'}(0) = {\btau_i} = \bc^{'}(0) + O\left(h^4\right)\bn_i, \,\,{ \text{as}\; h \rightarrow 0}. \label{eq:conicsIderZero}
				\end{equation}						
				Moreover, let $\bn_{i,l}$ and $\bn_{i-1,r}$ be the unit normal vectors of $\bc_{i,l}$ (respectively of $\bc_{i-1,r}$) at $P_i$. Since $\bn_{i,l}=\bn_{i-1,r}=\btau_i^{\bot},$ from \eqref{O4} it holds
				\begin{equation*}
					\bn_{i,l}=\bn_{i-1,r}=\bn_{i}+O\left(h^4\right)\bt_i,\,\,{ \text{as}\; h \rightarrow 0}.
				\end{equation*}
				Consequently, the principal curvature vectors of the three curves $\bc$, $\bc_{i-1,r}$, and $\bc_{i,l}$, at $P_i$ are $\bc^{''}(0)$, $\bc_{i-1,r}^{''}(0)$, and $\bc_{i,l}^{''}(0)$, respectively, with
				\begin{equation}
					\begin{split}
						\bc^{''}(0) &= \kappa(P_i)\bn_i, \\
						\bc_{i-1,r}^{''}(0) &= \kappa_{i-1,r}(P_i){\btau_i}^{\bot} = \kappa_{i-1,r}(P_i)\bn_i + O\left(h^4\right)\bt_i, \\
						\bc_{i,l}^{''}(0) &= \kappa_{i,l}(P_i){\btau_i}^{\bot} = \kappa_{i,l}(P_i)\bn_i + O\left(h^4\right)\bt_i,
					\end{split} \label{eq:conicIIderparZero}
				\end{equation}
				{ as $h$ tends to 0}, where $\kappa(P_i), \kappa_{i-1,r}(P_i)$, and $\kappa_{i,l}(P_i)$, denote the curvature values at $P_i$ of the curves $\bc$, $\bc_{i-1,r}$, and $\bc_{i,l}$, respectively.
				
				Expanding $\bc(s)$ by its Taylor series around $s=0$, {as $s_{i-1}$ and $s_{i+1}$ tend to 0}, we get
				\begin{equation}	
					\bc(-s_{i-1}) = \bc(0)-s_{i-1}\,\bc^{'}(0) +\frac{{s_{i-1}}^2}{2}\bc^{''}(0) -\frac{{s_{i-1}}^3}{6}\bc^{'''}(0) +O\left({s_{i-1}}^4\right) \label{eq:conicTaylorparL}
				\end{equation}
				and
				\begin{equation}
					\bc(s_{i+1}) = \bc(0)+s_{i+1}\,\bc^{'}(0) +\frac{{s_{i+1}}^2}{2}\bc^{''}(0) +\frac{{s_{i+1}}^3}{6}\bc^{'''}(0) + O\left({s_{i+1}}^4\right).
					\label{eq:conicTaylorparR}
				\end{equation}
				In a similar way, expanding the interpolating conics by their Taylor series around $s=0$, {as $s_{i-1}$ and $s_{i+1}$ tend to 0}, we obtain
				
				\begin{align}
					\bc_{i-1,r}(-s_{i-1}) &=
					\bc_{i-1,r}(0) - s_{i-1}\,\bc_{i-1,r}^{'}(0)+\frac{{s_{i-1}}^2}{2}\bc_{i-1,r}^{''}(0) - \frac{s_{i-1}^3}{6} \bc^{'''}_{i-1,r}(0)\label{eq:conicLTaylorparL} \\
					&\quad+O\left({s_{i-1}}^4\right), \nonumber \\
					\bc_{i-1,r}(s_{i+1}) &=
					\bc_{i-1,r}(0) + s_{i+1}\,\bc_{i-1,r}^{'}(0)+\frac{{s_{i+1}}^2}{2}\bc_{i-1,r}^{''}(0)
					+ \frac{{s_{i+1}}^3}{6}\bc^{'''}_{i-1,r}(0)  \label{eq:conicLTaylorparR} \\
					&\quad+O\left({s_{i+1}}^4\right),\nonumber\\
					\bc_{i,l}(-s_{i-1}) &= \bc_{i,l}(0) - s_{i-1}\,\bc_{i,l}^{'}(0)+\frac{{s_{i-1}}^2}{2}\bc_{i,l}^{''}(0)
					-\frac{s_{i-1}^3}{6} \bc^{'''}_{i,l}(0) + O\left({s_{i-1}}^4\right), \label{eq:conicRTaylorparL} \\
					\bc_{i,l}(s_{i+1}) &= \bc_{i,l}(0)+s_{i+1}\,\bc_{i,l}^{'}(0)+\frac{{s_{i+1}}^2}{2}\bc_{i,l}^{''}(0) + \frac{{s_{i+1}}^3}{6}\bc^{'''}_{i,l}(0) +O\left({s_{i+1}}^4\right). \label{eq:conicRTaylorparR}
				\end{align}
				
				From \eqref{eq:conicsparL} we deduce that $2\bc(-s_{i-1}) - \bc_{i-1,r}(-s_{i-1}) - \bc_{i,l}(-s_{i-1}) = 0$, which together with \eqref{eq:conicsZero}, \eqref{eq:conicsIderZero}, \eqref{eq:conicIIderparZero}, \eqref{eq:conicTaylorparL}, \eqref{eq:conicLTaylorparL}, and \eqref{eq:conicRTaylorparL}, {as $h, s_{i-1}$, and $s_{i+1}$ tend to 0}, leads to
				\begin{equation}\label{eq:TaylorparL}
					\begin{split}
						0 &= -\frac{{s_{i-1}}^3}{6} \left(2\,\bc^{'''}(0) -\bc_{i,l}^{'''}(0) -\bc_{i-1,r}^{'''}(0) \right) +2\,s_{{i}}O \left({h}^{4} \right) \bn_{{i}}\\
						&\quad + \frac{{s_{i-1}}^2}{2}\left(2\,\kappa(P_i) -\kappa_{i,l}(P_i) -\kappa_{i-1,r}(P_i) \right)\bn_{{i}}+{s_{{i}}}^{2}O \left({h}^{4} \right) \bt_{{i}} + O\left(s_{i-1}^4\right) .
					\end{split}
				\end{equation}
				Analogously, with \eqref{eq:conicsparR} on one side showing that $2\bc(s_{i+1}) - \bc_{i-1,r}(s_{i+1}) - \bc_{i,l}(s_{i+1}) = 0$, and substituting on the other side \eqref{eq:conicsZero}, \eqref{eq:conicsIderZero}, \eqref{eq:conicIIderparZero}, \eqref{eq:conicTaylorparR}, \eqref{eq:conicLTaylorparR}, and \eqref{eq:conicRTaylorparR}, {as $h, s_{i-1}$, and $s_{i+1}$ tend to 0}, we obtain
				\begin{equation}\label{eq:TaylorparR}
					\begin{split}
						0 &= \frac{{s_{i+1}}^3}{6} \left(2\,\bc^{'''}(0) -\bc_{i,l}^{'''}(0) -\bc_{i-1,r}^{'''}(0) \right) -2\,s_{{i+1}}O \left({h}^{4} \right) \bn_{{i}} \\
						&\quad+ \frac{{s_{i+1}}^2}{2}\left(2\,\kappa(P_i) -\kappa_{i,l}(P_i)-\kappa_{i-1,r}(P_i) \right)\bn_{{i}}+{s_{{i+1}}}^{2}O \left({h}^{4} \right) \bt_{{i}} + O\left(s_{i+1}^4\right) .
					\end{split}
				\end{equation}
				
				Simplifying the term $\left(2\,\bc^{'''}(0) -\bc_{i,l}^{'''}(0) -\bc_{i-1,r}^{'''}(0) \right)$ in \eqref{eq:TaylorparL} and \eqref{eq:TaylorparR}, and collecting coefficients of the normal vector we find
				\begin{equation}
					\frac{ 2\,\kappa(P_i) -\kappa_{i,l}(P_i) -\kappa_{i-1,r}(P_i)}{2} = 2\, \frac{s_{i-1}- s_{i+1}}{s_{i-1} s_{i+1}} O\left(h^4\right) . \label{Norma}
				\end{equation}
				Being $s_{i-1}=O(h)$ and $s_{i+1}=O(h)$, if $h \rightarrow 0$, it holds
				\begin{equation}
					\frac{s_{i-1}- s_{i+1}}{s_{i-1} s_{i+1}} O\left(h^4\right) =\, O(h^3) . \label{O3}
				\end{equation}
				
				Hence, after \eqref{Norma} and \eqref{O3} we get
				\begin{equation*}
					\kappa(P_i) -\frac{ \kappa_{i,l}(P_i) +\kappa_{i-1,r}(P_i)}{2}=\,O(h^3), \,\,{\text{as}\, s_{i-1}, s_{i+1}\, \rightarrow 0} .
				\end{equation*}
				
				Moreover, if the points $\{P_j, j=i-1,i,i+1\}$ are arc-length uniformly sampled, i.e., $s_{i-1}=s_{i+1}$, from \eqref{Norma} and \eqref{O3} it holds
				\begin{equation*}
					\kappa(P_i) -\frac{ \kappa_{i,l}(P_i) +\kappa_{i-1,r}(P_i)}{2}=\, O\left(h^4\right),\,\,{\text{as}\; s_{i-1}, s_{i+1}\rightarrow 0} . \hfill \qedsymbol
				\end{equation*}
			\end{proof}
			
			Recall that the general $3$-\emph{points} curvature approximations have first order approximation order. In the particular case when the three points are \textit{uniformly} arc-length distributed, the curvature approximation of the general $3$-\emph{points} curvature approximations is of second order \cite{Belyaev,Be99,Ca98}.
			
			Note that in addition to the third order approximation of the curvature estimator, which is obtained by averaging the curvatures at $P_i$ of the interpolating conics (numerical experiments in the next section show that in fact the approximation order may be very close to $O\left(h^4\right)$), this numerical approximation has another advantages. These include invariance with respect to the special Euclidean group of transformations $SE(2)$ and a low computational overhead using the output of ABFH.

			\section{Numerical experiments}
			\label{sec:num_ex}
			
			One important objective of the proposed method is to assign curvature values to a set of points, as an intermediate step for free design of curves, where only a small sample of unevenly distributed points are available and small variations in the curvature values do not appreciably affect the shape of the corresponding curve. \emph {ConicCurv} is not supposed to be used to approximate data from image processing or another applications, which are affected by noise or discretization errors. In \cite{Hermann07,Lewiner,Wo93} and in the references contained therein, there is a fairly complete study of the methods to estimate curvature in the case of digital spaces, i.e. curves extracted from images.

			\subsection{Comparison of curvature estimators}
			\label{subsec:comparisoncurvature}

			We tested the following four curvature estimation methods by measuring the relative error between the exact curvature value and the corresponding curvature estimates computed on the benchmark of representative curves given in \cite{Albrecht2} to test tangent estimations. The graph of the curves, the corresponding parametrization and the parametric values of the selected points are shown in Figures \ref{fig:pol}, \ref{fig:agnesi}, \ref{fig:folium}, \ref{fig:bicorn}, \ref{fig:teardrop}, \ref{fig:gaussian}, \ref{fig:ellipse}, and in Table \ref{tab:comp}.			
			
			Let $\bc(t)$ be the parametrization of any of the representative curves below. For five parametric values $t_j,\, j=1, \ldots,5$ the points $P_j=\bc(t_j)$ on $\bc(t)$ are computed and, depending on the curvature estimation method, either set of points $\{ P_j, \,j=2,3,4\}$ or $\{P_j, \,j=1,2,3,4,5\}$ is interpolated in order to obtain an estimation of the curvature at point $P_3=\bc(t_3)$.
			
			In our comparison we consider the following curvature estimation methods
			\begin{itemize}
				\item[$\triangleright$] \emph{Circle}: the estimate of curvature value at $P_3$ is the inverse of the radius of the circle interpolating $\{ P_j, \,j=2,3,4\}$.
				\item[$\triangleright$] \emph{Poly}$4$: the estimate of curvature value at $P_3$ is the curvature at $P_3$ of the fourth degree polynomial curve interpolating $\{P_j, \,j=1,2,3,4,5 \}$ at Chebyshev's nodes $\cos\left( \frac{(5 - 2j)\pi}{10} \right),\,j=-2,\ldots,2$.
				\item[$\triangleright$] \emph{Conic}: the estimate of curvature value at $P_3$ is the curvature of the conic interpolating $\{P_j, \,j=1,2,3,4,5 \}$.
				\item[$\triangleright$] \emph {ConicCurv}: the estimate of curvature value at $P_3$ is the average of the curvature values at $P_3$ of the conics interpolating the points $\{ P_j, \,j=2,3,4\}$ and the tangent directions assigned by means of ABFH to $\{ P_j, \,j=2,3\}$ and to $\{ P_j, \,j=3,4\}$, respectively.
			\end{itemize}
			
			The results from the tests are given in Table \ref{tab:rerror1}. We can observe that \emph {ConicCurv} shows a better performance than the standard curvature estimation methods in the case of planar convex data.
			
			\begin{table}[!hbt]
				\center
				\caption{Curves tested: curve type, parametrization, and parameter values.}\label{tab:comp}
				\begin{tabular}{|l|>{$}l<{$}|>{$}l<{$}|}  \hline
					\mbox{Curve} & \bc(t) =(\,x(t)\,,\, y(t)\,) & (t_1,\, t_2,\, t_3,\, t_4,\, t_5) \\ \thline
					\mbox{Polynomial} & \left( t\,,\,\frac15 -\frac15\, \left( 1-t \right) ^{5}\right) & (0,\,0.1,\,0.2,\,0.3,\,0.4 )     \\ \hline
					\mbox{Witch of Agnesi} & \left( t\,,\, \frac{t}{1+t^2} \right) & ( -2.25,\, -2,\, -1.5,\, -1,\, -0.75)     \\ \hline
					\mbox{Folium of Descartes} & \left( \,{\frac {3t}{{t}^{3}+1}}\,,\,{\frac {3{t}^{2}}{{t}^{3}+1}} \right) & (-0.1,\,0.1,\, 0.3,\, 0.5,\, 0.7)\\ \hline
					\mbox{Bicorn} & \left(\sin \left( t \right) \,,\,{\frac { {\cos}^{2} \left( t \right)}{2-\cos \left( t \right) }}\right) & ( 0.139,\, 0.278,\, 0.417,\, 0.556,\, 0.626 ) \\ \hline
					\mbox{Tear Drop} & \left( \cos \left( t \right) \,,\,\sin \left( t \right){\sin}^{2} \left( \frac{t}2 \right) \right) &( 1.867,\, 1.934,\, 2,\, 2.034,\, 2.067) \\ \hline
					\mbox{Exponential} & \left( t\,,\,{{\rm e}^{-2\, \left( t- 0.5 \right) ^{2}}} \right) & ( 0.2,\, 0.4,\, 0.5,\, 0.8,\, 0.9) \\ \hline
					\mbox{Ellipse} & \left( 5\cos(t)\,, \,2\sin(t)\right) & ( 0.539,\, 0.843,\, 1.222,\, 1.6,\, 1.904)  \\  \hline
				\end{tabular}
			\end{table}
			
			\begin{figure}[h!]
				\centering
				\subcaptionbox{Parameterization: $\left(t,\tfrac{1}{5}(1-(1-t)^5)\right)$}[.48\textwidth][c]{
					\begin{tikzpicture}
						\pgfplotsset{%width=\l,height=\la,
							width=.5\textwidth,
							compat=newest,
							hide x axis, hide y axis}
						\begin{axis}[axis x line = center,axis y line = middle,
							xlabel={$x$},
							ylabel={$y$},
							xmin=0,xmax=1.7,ymin=0,ymax=.35,smooth,clip=true,
							xtick = {0.3,0.6,0.9,1.2,1.5},
							ytick = {0.1,0.2,0.3},
							ylabel style={yshift=.3cm,xshift=-.5cm},
							xlabel style={yshift=-.1cm,xshift=.5cm}%,title=Curva
							]
							
							\addplot[domain=0:1.6,samples=200] (x,{.2 - .2*(1-x)^5});
							
							\shade[ball color=red!80!black] (.3,{.2 - .2*(1-.3)^5}) circle (2.1pt);
							\shade[ball color=blue!80!black] (.1,{.2 - .2*(1-.1)^5}) circle (2.1pt);
							\shade[ball color=blue!80!black] (.2,{.2 - .2*(1-.2)^5}) circle (2.1pt);
							\shade[ball color=blue!80!black] (.5,{.2 - .2*(1-.5)^5}) circle (2.1pt);
							\shade[ball color=blue!80!black] (.8,{.2 - .2*(1-.8)^5}) circle (2.1pt);
						\end{axis}
					\end{tikzpicture}
				}
				\hfill
				\subcaptionbox{Curvature graph and center point}[.48\textwidth][c]{
					\begin{tikzpicture}
						\pgfplotsset{%width=\l,height=\la,
							width=.5\textwidth,
							compat=newest,
							hide x axis, hide y axis}
						\begin{axis}[axis x line = center,axis y line = middle,
							xlabel={$x$},
							ylabel={$y$},
							xmin=0,xmax=.8,ymin=0,ymax=1.8,smooth,clip=true,
							ylabel style={yshift=.3cm,xshift=-.5cm},
							xlabel style={yshift=-.1cm,xshift=.5cm},%title=Curvatura,
							legend pos=north east]
							
							\addplot[domain=0:1.6,samples=200] (x,{(-4*(x-1)^3)/(1+(x-1)^8)^(3/2)});
							
							\shade[ball color=red!80!black] (axis cs:.3,1.2613) circle (2.1pt);				
						\end{axis}
					\end{tikzpicture}
				}
				\caption{Test curve: polynomial.} \label{fig:pol}
			\end{figure}
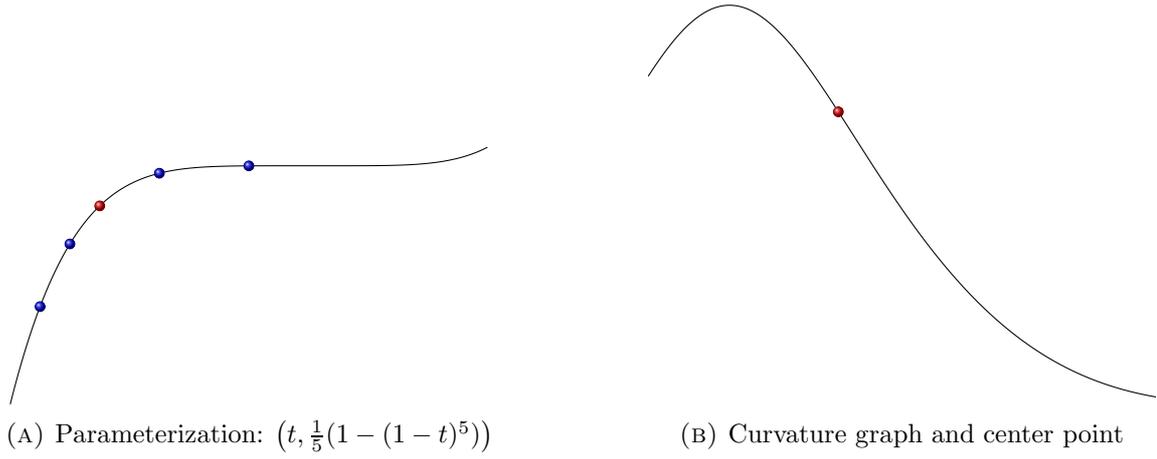
			
			\begin{figure}[h!]
				\centering
				\subcaptionbox{Parameterization: $\left( t, \frac{t}{1+t^2} \right)$ }[.48\textwidth][c]{
					\begin{tikzpicture}
						\pgfplotsset{%width=\l,height=\la,
							width=.5\textwidth,
							hide x axis, hide y axis,
							compat=newest}
						\begin{axis}[axis x line = bottom, axis y line = left,
							xlabel={$x$},
							ylabel={$y$},
							xmin=-2.5,xmax=-0.4,
							ymin=-0.55,ymax=-0.31,
							smooth,clip=true,
							ylabel style={sloped like x axis,yshift=1.9cm,xshift=1.cm},
							xlabel style={yshift=1cm,xshift=2.55cm}%,title=Curva
							]
							
							\addplot[domain=-2.5:-0.5,samples=200] ({x},{x/(1 + x^2)});
							
							\shade[ball color=red!80!black] ({-1.5},{(-1.5)/(1+1.5^2)}) circle (2.1pt);
							\shade[ball color=blue!80!black] ({-2.25},{(-2.25)/(1+2.25^2)}) circle (2.1pt);
							\shade[ball color=blue!80!black] ({-2.},{(-2.)/(1+2.^2)}) circle (2.1pt);
							\shade[ball color=blue!80!black] ({-1.},{(-1.)/(1+1.^2)}) circle (2.1pt);
							\shade[ball color=blue!80!black] ({-0.75},{(-0.75)/(1+0.75^2)}) circle (2.1pt);
						\end{axis}
					\end{tikzpicture}
				}
				\hfill
				\subcaptionbox{Curvature graph and center point}[.48\textwidth][c]{
					\begin{tikzpicture}
						\pgfplotsset{%width=\l,height=\la,			
							width=5cm,
							compat=newest,
							hide x axis, hide y axis
						}
						\begin{axis}[axis x line = center,axis y line = middle,
							xlabel={$x$},
							ylabel={$y$},
							xmin=-2.3,xmax=-0.4,ymin=-0.1,ymax=1.1,smooth,clip=true,
							ylabel style={yshift=.3cm,xshift=-.5cm},
							xlabel style={yshift=-.1cm,xshift=.5cm},%title=Curvatura,
							legend pos=north east]
							
							\addplot[domain=-2.5:-0.5,samples=200,samples=200,variable=t] (t,{(2*((1+t^2)^3)*abs(t)*abs(-3+t^2))/((2 + 2*t^2 + 7*t^4 + 4*t^6 + t^8)^(3/2))});
							
							\shade[ball color=red!80!black] (axis cs:-1.5,0.0641907) circle (2.1pt);
							
						\end{axis}
					\end{tikzpicture}
				}
				\caption{Test curve: Witch of Agnesi.} \label{fig:agnesi}
			\end{figure}
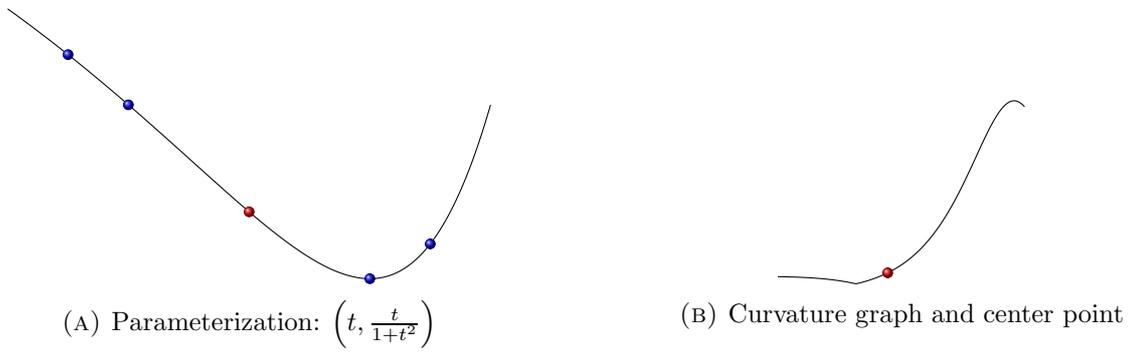
			
			\begin{figure}[h!]
				\centering
				\subcaptionbox{Parameterization: $\left( {\frac{3t}{{t}^{3}+1}}, {\frac{3{t}^{2}}{{t}^{3}+1}} \right)$}[.48\textwidth][c]{
					\begin{tikzpicture}
						\pgfplotsset{%width=\l,height=\la,
							width=.5\textwidth,
							hide x axis, hide y axis,
							compat=newest}
						\begin{axis}[axis x line = center,axis y line = middle,
							xlabel={$x$},
							ylabel={$y$},
							xmin=0,xmax=1.8,ymin=0,ymax=1.7,
							smooth,clip=true,
							ylabel style={yshift=.3cm,xshift=-.5cm},
							xlabel style={yshift=-.1cm,xshift=.5cm}%,title=Curva
							]
							
							\addplot[domain=0:2] ({3*x/(1+x^3)},{(3*x^2)/(1+x^3)});
							
							\shade[ball color=red!80!black] ({3*.5/(1+.5^3)},{(3*.5^2)/(1+.5^3)}) circle (2.1pt);
							\shade[ball color=blue!80!black] ({3*.1/(1+.1^3)},{(3*.1^2)/(1+.1^3)}) circle (2.1pt);
							\shade[ball color=blue!80!black] ({3*.25/(1+.25^3)},{(3*.25^2)/(1+.25^3)}) circle (2.1pt);
							\shade[ball color=blue!80!black] ({3*.85/(1+.85^3)},{(3*.85^2)/(1+.85^3)}) circle (2.1pt);
							\shade[ball color=blue!80!black] ({3*.9/(1+.9^3)},{(3*.9^2)/(1+.9^3)}) circle (2.1pt);
						\end{axis}
					\end{tikzpicture}
				}
				\hfill
				\subcaptionbox{Curvature graph and center point}[.48\textwidth][c]{
					\begin{tikzpicture}
						\pgfplotsset{%width=\l,height=\la,
							width=.5\textwidth,
							compat=newest,
							hide x axis, hide y axis}
						\begin{axis}[axis x line = center,axis y line = middle,
							xlabel={$x$},
							ylabel={$y$},
							xmin=0,xmax=.7,ymin=.4,ymax=1.3,smooth,clip=true,
							xtick = {0.2,0.4,0.6},
							ylabel style={yshift=.3cm,xshift=-.5cm},
							xlabel style={yshift=-.1cm,xshift=.5cm},%title=Curvatura,
							legend pos=north west]

							\addplot[domain=0:1.1,samples=200] {-((-36*x^2/(1+x^3)^2 + 54*x^5/(1+x^3)^3)*(6*x/(1+x^3)- 9*x^4/(1+x^3)^2) -(3/(1+x^3)-9*x^3/(1+x^3)^2)*(6/(1+x^3) -54*x^3/(1+x^3)^2 +54*x^6/(1+x^3)^3))/((3/(1+x^3) - 9*x^3/(1+x^3)^2)^2+(6*x/(1+x^3) -9*x^4/(1+x^3)^2)^2)^(3/2)};
							
							\shade[ball color=red!80!black] (axis cs:.5,.617) circle (2.1pt);
						\end{axis}
					\end{tikzpicture}
				}
				\caption{Test curve: Folium of Descartes.} \label{fig:folium}
			\end{figure}
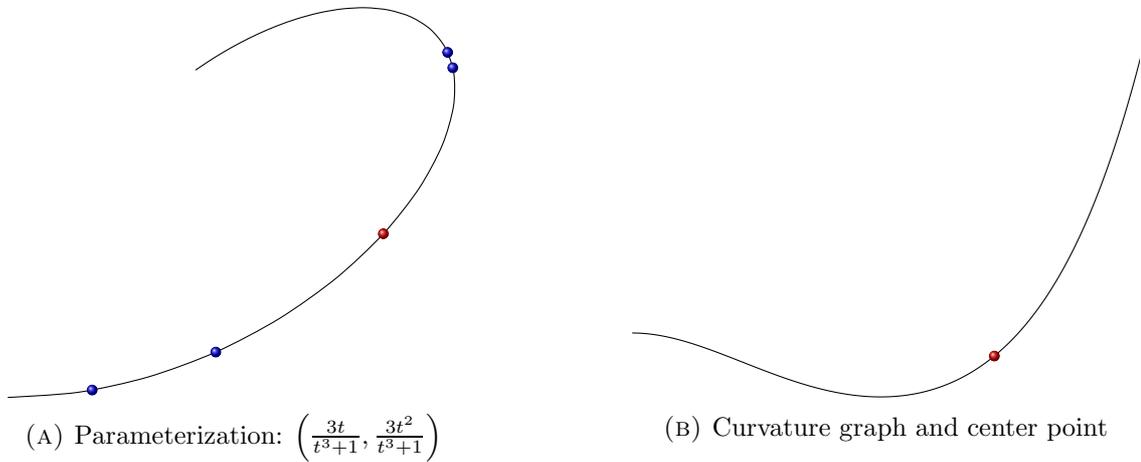
			
			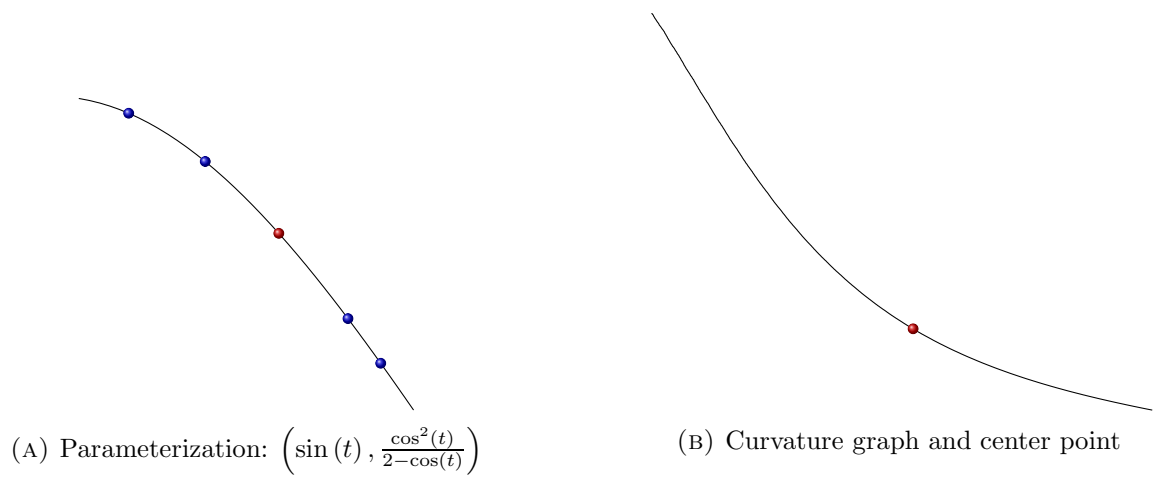
\begin{figure}[h!]
				\centering 	
				\subcaptionbox{Parameterization: $\left(\sin \left( t \right) ,{\frac { {\cos}^{2} \left( t \right)}{2-\cos \left( t \right) }}\right)$}[.48\textwidth][c]{
					\begin{tikzpicture}
						\pgfplotsset{%width=\l,height=\la,
							width=.5\textwidth,
							hide x axis, hide y axis,
							compat=newest}
						\begin{axis}[axis x line = center,axis y line = middle,
							xlabel={$x$},
							ylabel={$y$},
							xmin=0,xmax=.9,ymin=0.4,ymax=1.1,smooth,clip=true,
							ylabel style={yshift=.3cm,xshift=-.5cm},
							xlabel style={yshift=-.1cm,xshift=.5cm}%,title=Curva
							]
							
							\addplot[domain=0.05:0.7,samples=200,variable=t] ({sin(deg(t))},{(cos(deg(t))*cos(deg(t)))/(2 - cos(deg(t)))});
							
							\shade[ball color=red!80!black] (0.405019355426869,	0.769978416669331) circle (2.1pt);
							\shade[ball color=blue!80!black] (0.138552829041508,	0.971433678667396) circle (2.1pt);
							\shade[ball color=blue!80!black] (0.274432986257786,	0.890496996623145) circle (2.1pt);
							\shade[ball color=blue!80!black] (0.527792937030293,	0.626992640387948) circle (2.1pt);
							\shade[ball color=blue!80!black] (0.585907943376195,	0.55203389847862) circle (2.1pt);
						\end{axis}
					\end{tikzpicture}
				}
				\hfill
				\subcaptionbox{Curvature graph and center point}[.48\textwidth][c]{
					\begin{tikzpicture}
						\pgfplotsset{%width=\l,height=\la,
							width=.5\textwidth,
							hide x axis, hide y axis,
							compat=newest}
						\begin{axis}[axis x line = center,axis y line = middle,
							xlabel={$x$},
							ylabel={$y$},
							xmin=0.1,xmax=0.7,ymin=0,ymax=2.51,
							smooth,clip=true,
							ylabel style={yshift=.3cm,xshift=-.5cm},
							xlabel style={yshift=-.1cm,xshift=.5cm}%,title=Curva
							]
							
							\addplot[domain=0.1:0.7, samples=1000, variable=t] (t,{6*sqrt(2)*sqrt((((-2 + cos(deg(t)))^6)*((3 - 2*sec(deg(t)))^2))/((73 - 80*cos(deg(t)) + 9*cos(deg(2*t)))^3))});
							
							\shade[ball color=red!80!black] (0.417,0.615277) circle (2.1pt);
						\end{axis}
					\end{tikzpicture}
				}
				\caption{Test curve: Bicorn.} \label{fig:bicorn}
			\end{figure}
			
			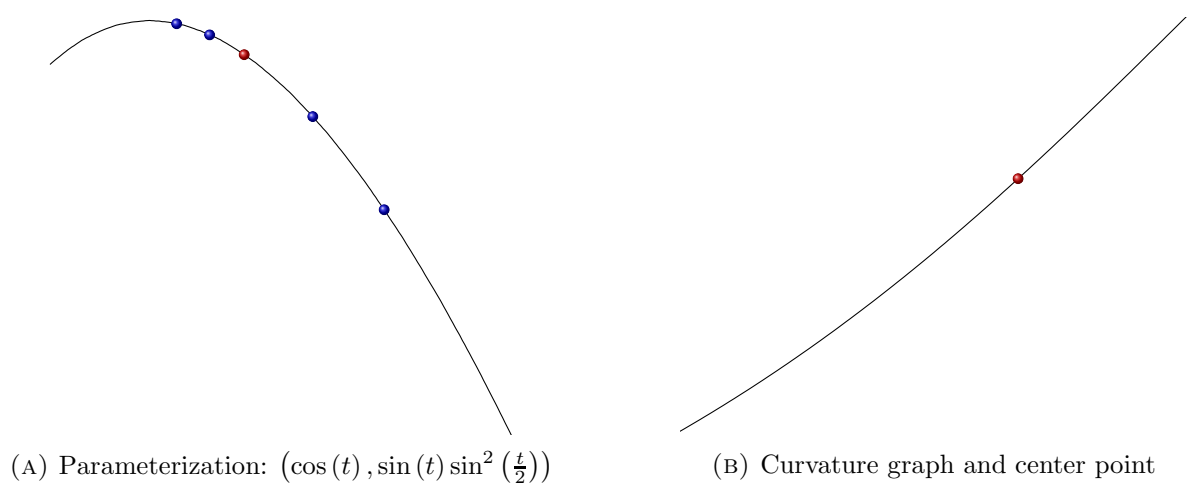
\begin{figure}[h!]
				\centering 	
				\subcaptionbox{Parameterization: $\left( \cos \left( t \right) ,\sin \left( t \right){\sin}^{2} \left( \frac{t}{2} \right) \right)$}[.48\textwidth][c]{
					\begin{tikzpicture}
						\pgfplotsset{%width=\l,height=\la,
							width=.5\textwidth,
							hide x axis, hide y axis,	
							compat=newest}
						\begin{axis}[
							%		axis x line = center,axis y line = middle,
							xmin=-0.6, xmax=-0.15,
							ymin=0.58, ymax=0.65,
							smooth,clip=true,
							]
							
							\addplot[domain=1.75:2.2,samples=210] ({cos(deg(x))},{sin(deg(x))*sin(deg(x)/2)*sin(deg(x)/2)});

							\shade[ball color=red!80!black] ({cos(deg(2))},{sin(deg(2))*sin(deg(2)/2)*sin(deg(2)/2)}) circle (2.1pt);
							\shade[ball color=blue!80!black] ({cos(deg(1.867))},{sin(deg(1.867))*sin(deg(1.867)/2)*sin(deg(1.867)/2)}) circle (2.1pt);
							\shade[ball color=blue!80!black] ({cos(deg(1.934))},{sin(deg(1.934))*sin(deg(1.934)/2)*sin(deg(1.934)/2)}) circle (2.1pt);
							\shade[ball color=blue!80!black] ({cos(deg(2.034))},{sin(deg(2.034))*sin(deg(2.034)/2)*sin(deg(2.034)/2)}) circle (2.1pt);
							\shade[ball color=blue!80!black] ({cos((deg(2.067)))},{sin(deg(2.067))*sin(deg(2.067)/2)*sin(deg(2.067)/2)}) circle (2.1pt);				 
						\end{axis}
					\end{tikzpicture}
				}
				\hfill
				\subcaptionbox{Curvature graph and center point}[.48\textwidth][c]{
					\begin{tikzpicture}
						\pgfplotsset{%width=\l,height=\la,
							width=.5\textwidth,
							hide x axis, hide y axis,	
							compat=newest}
						\begin{axis}[
							%		axis x line = center,axis y line = middle,
							%			xmin=-0.6, xmax=-0.15,
							xmin=1.8, xmax=2.1,
							ymin=0.75, ymax=1.75,
							smooth,clip=true,
							]
							
							\addplot[domain=1.75:2.2,samples=210,variable=t] {-sqrt(8)*(2*cos(deg(t)) + cos(deg(2*t)))*(cosec(deg(t)/2))/sqrt((5 + 4*cos(deg(t)) - cos(deg(3*t)))^3)};
							
							\shade[ball color=red!80!black] (2,1.36441) circle (2.1pt);				
						\end{axis}
					\end{tikzpicture}
				}
				\caption{Test curve: Tear Drop curve.} \label{fig:teardrop}
			\end{figure}
			
			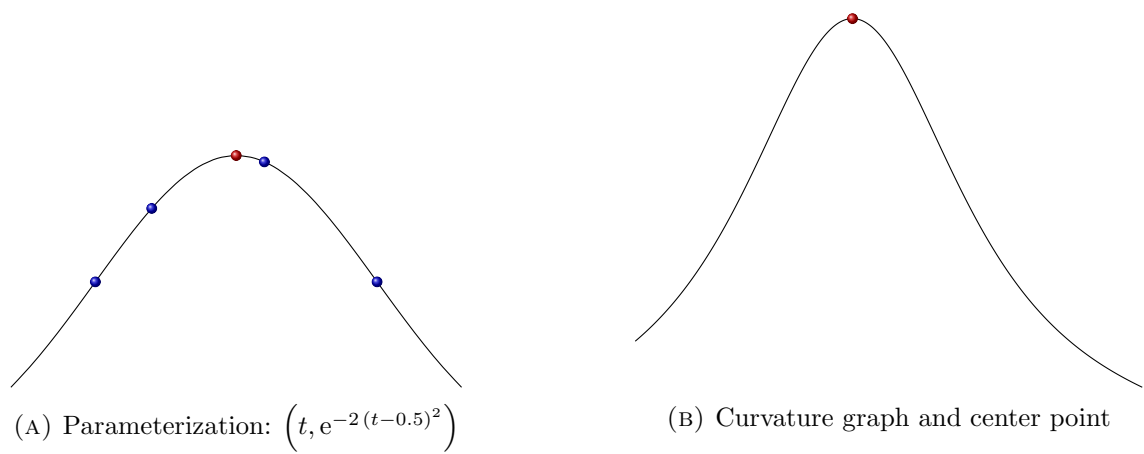
\begin{figure}[h!]
				\centering 	
				\subcaptionbox{Parameterization: $\left( t,{{\rm e}^{-2\, \left( t- 0.5 \right)^{2}}} \right)$ }[.48\textwidth][c]{
					\begin{tikzpicture}
						\pgfplotsset{%width=\l,height=\la,
							width=.5\textwidth,
							hide x axis, hide y axis,	
							compat=newest}
						\begin{axis}[axis x line = center,axis y line = middle,
							xlabel={$x$},
							ylabel={$y$},
							xmin=-.4,xmax=1.4,ymin=0,ymax=1.3,smooth,clip=true,
							ylabel style={yshift=.3cm,xshift=-.5cm},
							xlabel style={yshift=-.1cm,xshift=.5cm}%,title=Curva
							]
							\addplot[domain=-.3:1.3] {exp(-(sqrt(2)*x-(1/2)*sqrt(2))^2)};
							
							\shade[ball color=red!80!black] (.5,1) circle (2.1pt);
							\shade[ball color=blue!80!black] (0,0.6065) circle (2.1pt);
							\shade[ball color=blue!80!black] (.2,0.8353) circle (2.1pt);
							\shade[ball color=blue!80!black] (.6,0.9802) circle (2.1pt);
							\shade[ball color=blue!80!black] (1,0.6065) circle (2.1pt);
							
						\end{axis}
					\end{tikzpicture}
				}
				\hfill
				\subcaptionbox{Curvature graph and center point}[.48\textwidth][c]{
					\begin{tikzpicture}
						\pgfplotsset{%width=\l,height=\la,
							width=.5\textwidth,
							hide x axis, hide y axis,
							compat=newest}
						\begin{axis}[axis x line = center,axis y line = middle,
							xlabel={$x$},
							ylabel={$y$},
							xmin=.2,xmax=0.9,ymin=0.1,ymax=4.3,smooth,clip=true,
							%ylabel style={yshift=.3cm,xshift=-.5cm},
							%xlabel style={yshift=-.1cm,xshift=.5cm}%,title=Curvatura
							]
							
							\addplot[domain=0.2:0.9,samples=200] {-16*exp(-(1/2)*(2*x-1)^2)*x*(x-1)/(1+4*(2*x-1)^2*(exp(-(1/2)*(2*x-1)^2))^2)^(3/2)};
							
							\shade[ball color=red!80!black] (.5,4) circle (2.1pt);
							
						\end{axis}
					\end{tikzpicture}
				}
				\caption{Test curve: exponential.} \label{fig:gaussian}
			\end{figure}
			
			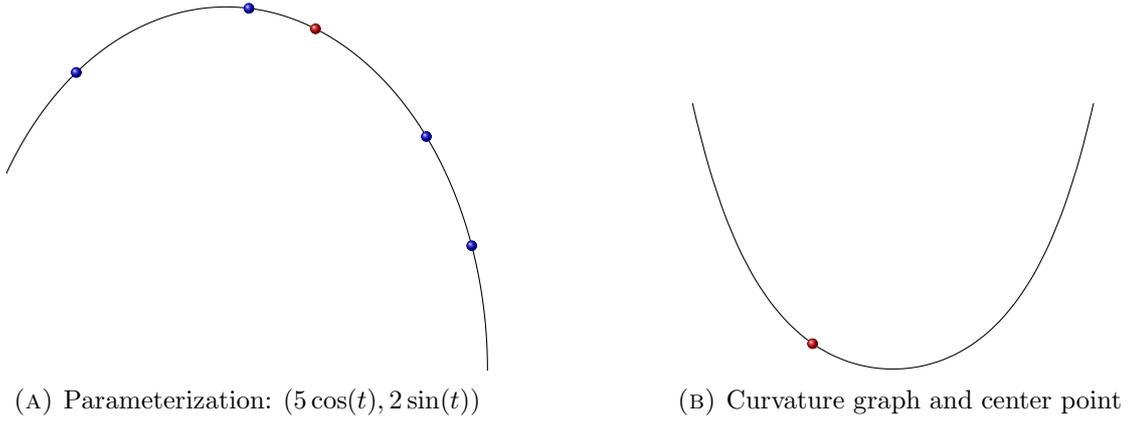
\begin{figure}[h!]
				\centering 	
				\subcaptionbox{Parameterization: $\left( 5\cos(t), 2\sin(t)\right)$ }[.48\textwidth][c]{
					\begin{tikzpicture}
						\pgfplotsset{%width=\l,height=\la,
							width=.5\textwidth,
							hide x axis, hide y axis,	
							compat=newest}
						\begin{axis}[axis x line = center,axis y line = middle,
							xlabel={$x$},
							ylabel={$y$},
							xmin=-4.2,xmax=5.5,ymin=0,ymax=2.3,smooth,clip=true,
							ytick = {0.4,0.8,1.2,1.6,2},
							ylabel style={yshift=.3cm,xshift=-.5cm},
							xlabel style={yshift=-.1cm,xshift=.5cm}%,title=Curva
							]
							
							\addplot[domain=-pi:pi,samples=250] ({5*cos(deg(x))},{2*sin(deg(x))});
							
							\shade[ball color=red!80!black] ({5*cos(deg(7*pi/18))},{2*sin(deg(7*pi/18))}) circle (2.1pt);
							\shade[ball color=blue!80!black] ({5*cos(deg(pi/9))},{2*sin(deg(pi/9))}) circle (2.1pt);
							\shade[ball color=blue!80!black] ({5*cos(deg(2*pi/9))},{2*sin(deg(2*pi/9))}) circle (2.1pt);
							\shade[ball color=blue!80!black] ({5*cos(deg(17*pi/36))},{2*sin(deg(17*pi/36))}) circle (2.1pt);
							\shade[ball color=blue!80!black] ({5*cos(deg(25*pi/36))},{2*sin(deg(25*pi/36))}) circle (2.1pt);
							
						\end{axis}
					\end{tikzpicture}
				}
				\hfill
				\subcaptionbox{Curvature graph and center point}[.48\textwidth][c]{
					\begin{tikzpicture}
						\pgfplotsset{%width=\l,height=\la,
							width=.5\textwidth,
							hide x axis, hide y axis,
							compat=newest}
						\begin{axis}[axis x line = center,axis y line = middle,
							xlabel={$x$},
							ylabel={$y$},
							xmin=0.5,xmax=2.7,ymin=0,ymax=.22,smooth,clip=true,
							ytick = {-0.1,0.1,0.2,0.3},
							ylabel style={yshift=.3cm,xshift=-.5cm},
							xlabel style={yshift=-.1cm,xshift=.5cm}%,title=Curvatura
							]

							\addplot[mark=none,samples=150] {-(-10*cos(deg(x))^2-10*sin(deg(x))^2)/(25*sin(deg(x))^2 + 4*cos(deg(x))^2)^(3/2)};
							
							\shade[ball color=red!80!black] (7*pi/18,.0934) circle (2.1pt);
							
						\end{axis}
					\end{tikzpicture}
				}
				\caption{Test curve: ellipse.} \label{fig:ellipse}
			\end{figure}

			\begin{table}[!hbt]
				\centering
				\caption{Relative error between the exact curvature value for the parameter value $t_3$ and the corresponding estimates.} \label{tab:rerror1}
				\begin{tabular}{|l|l|l|l|l|}  \hline
					\mbox{Curve} & \emph{Circle} & \mbox{\emph{Poly}4} & \mbox{\emph{Conic}} & \emph {ConicCurv}\\ \thline
					\mbox{Polynomial}     & 0.059 & 0.123 & 0.049  & 0.049  \\ \hline
					\mbox{Witch of Agnesi}   & 0.008 & 0.002 & 0.007  & 0.006  \\ \hline
					\mbox{Folium of Descartes} & 0.029 & 0.110 & 0.003  & 0.0001 \\ \hline
					\mbox{Bicorn} 				& 0.029 & 0.185 & 0.006  & 0.006  \\ \hline
					\mbox{Tear Drop}		  & 0.030 & 0.027 & 0.00008 & 0.00006 \\ \hline
					\mbox{Exponential}			& 0.245 & 0.790 & 0.017  & 0.0006 \\ \hline
					\mbox{Ellipse} 			& 0.326 & 0.032 & 0.000  & 0.000  \\ \hline
				\end{tabular}
			\end{table}

			\subsection{Approximation order: numerical experiments}
			\label{subsec:ApO}
			
			The previous numerical experiment shows that using a sample of unevenly distributed points, \emph{Conic} and \emph{ConicCurv} have a substantially better performance compared to another classical curvature estimation schemes.
			
			The next experiment focuses on the comparison of the approximation order of Conic and \emph{ConicCurv}. Recall that both methods are $SE(2)$ invariant, have \emph{conic precision}, and depend on five data (either $5$ points or $3$ points and $2$ tangent directions).
			
			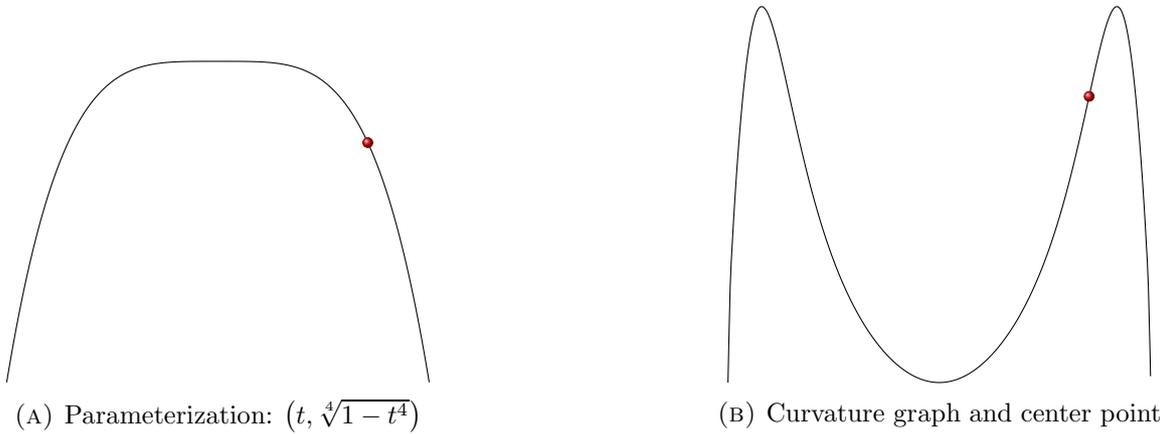
\begin{figure}[h!]
				\centering 	
				\subcaptionbox{Parameterization: $\left(t, \sqrt[4]{1-t^4}\right)$ }[.4\textwidth][c]{
					\begin{tikzpicture}
						\pgfplotsset{%width=\l,height=\la,
							width=.5\textwidth,
							hide x axis, hide y axis,
							compat=newest}
						\begin{axis}[axis x line = center,axis y line = middle,
							xlabel={$x$},
							ylabel={$y$},
							xmin=-1.2,xmax=1.2,ymin=-0.2,ymax=1.1,smooth,clip=true,
							ylabel style={yshift=.3cm,xshift=-.5cm},
							xlabel style={yshift=-.1cm,xshift=.5cm}%,title=Curva
							]
							
							\addplot[domain=-1.0:1.0,samples=200] ({x},{sqrt{sqrt{1-x^4}}});
							
							\shade[ball color=red!80!black] (.7093,{sqrt{sqrt{1-.7093^4}}}) circle (2.1pt);
							%		\shade[ball color=blue!80!black] (.4265,{sqrt{sqrt{1-.4265^4}}}) circle (2.1pt);
							%		\shade[ball color=blue!80!black] (.9921,{sqrt{sqrt{1-.9921^4}}}) circle (2.1pt);
							%		\shade[ball color=blue!80!black] (.613,{sqrt{sqrt{1-.613^4}}}) circle (2.1pt);
							%		\shade[ball color=blue!80!black] (.827,{sqrt{sqrt{1-.827^4}}}) circle (2.1pt);
							
						\end{axis}
					\end{tikzpicture}
				}
				\hfill
				\subcaptionbox{Curvature graph and center point}[.45\textwidth][c]{
					\begin{tikzpicture}
						\pgfplotsset{%width=\l,height=\la,
							width=.5\textwidth,
							hide x axis, hide y axis,
							compat=newest}
						\begin{axis}[axis x line = center,axis y line = middle,
							xlabel={$x$},
							ylabel={$y$},
							xmin=-1.2,xmax=1.2,ymin=-0.2,ymax=2.6,smooth,clip=true,
							ylabel style={yshift=.3cm,xshift=-.5cm},
							xlabel style={yshift=-.1cm,xshift=.5cm}%,title=Curva
							]
							
							\addplot[domain=-1.0:1.0,samples=200, variable=t] {3*(t^2)*sqrt(((1 - t^4)^(5/2))/((1 - 2*t^4 + t^8 + (t^6)*sqrt(1 - t^4))^3))};
							
							\shade[ball color=red!80!black] (.7093,1.91995) circle (2.1pt);
							
						\end{axis}
					\end{tikzpicture}
				}
				\caption{Graph of chosen function to compare the methods: Conic and {\em ConicCurv}.
					%Curvature vs $t$ plot restricted to the zoomed area.
				}
				\label{fig:curva-quart}
			\end{figure}
			
			Let $f: \, I^{k} \rightarrow \mathbb{R}$ be the $C^3$-continuous function \[f(t)=\sqrt[4]{1-t^4}\]
			in a neighborhood of $t_3=0.7093 $ and let $\{ h^{k}={\frac {0.4}{\sqrt {k+2}}},\,k=0,1,2, \ldots,7 \}$ be	a strict monotonic decreasing sequence. The exact value of the curvature of $\bc(t)=(t,f(t))$ at $P_3=(t_3,f(t_3))=(0.7093, 0.9296)$ is $\kappa_{ex}=1.9199$; see Fig. \ref{fig:curva-quart}.
			
			We estimate the approximation order of the curvature values at $t_3$ obtained with 2 methods, \emph{Conic} and \emph{ConicCurv}, when the values of $t_i$ are selected in the interval $$I^{k}=[0.7093-h^{k}\,,\,0.7093+h^{k}],$$ for $k=0,1,2, \ldots,7$, and $t_3^k=t_3$; see Fig. \ref{fig:ConicExper}.

			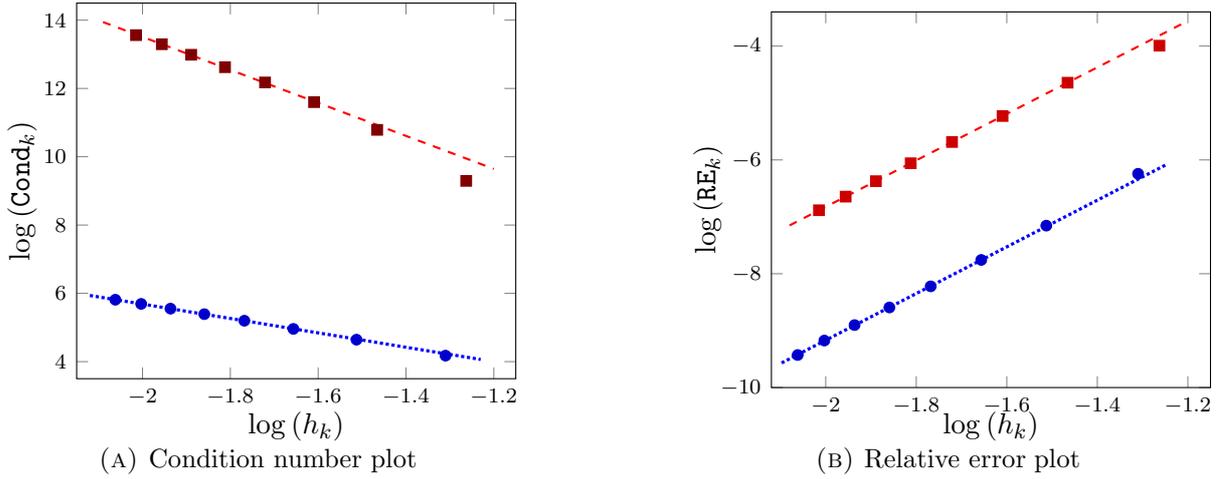
\begin{figure}[h]
				\centering	
				\captionsetup[subfigure]{aboveskip=-2pt,belowskip=-2pt}
				\subcaptionbox{Condition number plot \label{subfig:condExp}}[.45\textwidth][c]{
					\begin{tikzpicture}
						\pgfplotsset{compat=newest,
							label shift=-.03*.35\textwidth,
							tick label style={font=\scriptsize},
							%label style={font=\tiny},
							legend style={font=\footnotesize},
						}
						\begin{axis}[scale only axis,
							width=.35\textwidth,
							xmin=-2.15, xmax=-1.15,
							xminorticks=true,
							xlabel={$ \log{(h_k)}$},
							ymin=3.5, ymax=14.5,
							yminorticks=true,
							ytick distance=2,
							ylabel={$ \log{(\texttt{Cond}_k)}$},
							ylabel near ticks]			
							\addplot[domain=-2.09:-1.2,variable=h,color=red,dashed, thick] {3.84619541425311 - 4.83154749747642*h};
							\addplot [color=red!50!black, only marks, mark=square*, mark options={solid}, forget plot]
							table[row sep=crcr]{
								-1.2629  9.2927 \\
								-1.4656  10.7870 \\
								-1.6094  11.5986 \\
								-1.7210  12.1754 \\
								-1.8122  12.6219 \\
								-1.8892  12.9864 \\
								-1.9560  13.2943 \\
								-2.0149  13.5610 \\
							};
							
							\addplot[domain=-2.12:-1.23,variable=h,color=blue,densely dotted,very thick] {1.47318285060851 - 2.10574069427154*h};
							\addplot [color=blue!80!black, only marks, mark=*, mark options={solid}, forget plot]
							table[row sep=crcr]{
								-1.3100  4.1754 \\
								-1.5127  4.6430 \\
								-1.6565  4.9587 \\
								-1.7681  5.1979 \\
								-1.8593  5.3906 \\
								-1.9363  5.5520 \\
								-2.0031  5.6909 \\
								-2.0620  5.8129 \\
							};

						\end{axis}
					\end{tikzpicture}	
				}
				\hfill
				\subcaptionbox{Relative error plot \label{subfig:REExp}}[.45\textwidth][c]{
					\begin{tikzpicture}			
						\pgfplotsset{compat=newest,
							label shift=-.05*.35\textwidth,
							tick label style={font=\scriptsize},
							%label style={font=\tiny},
							legend style={font=\footnotesize},
						}	
						\begin{axis}[scale only axis,
							width=.35\textwidth,
							xmin=-2.12, xmax=-1.15,
							xminorticks=true,
							xlabel={$\log{(h_k)}$},
							ymin=-10, ymax=-3.4,
							yminorticks=true,				
							ytick distance=2,
							ylabel={$\log{(\texttt{RE}_k)}$},
							ylabel near ticks]				
							\addplot[domain=-2.08:-1.2,variable=h,color=red,dashed, thick] {1.34352208918068 + 4.08406612082821*h};
							\addplot [color=red!80!black, only marks, mark=square*, mark options={solid}, forget plot]
							table[row sep=crcr]{
								-1.2629  -3.9929 \\
								-1.4656  -4.6433 \\
								-1.6094  -5.2286 \\
								-1.7210  -5.6856 \\
								-1.8122  -6.0584 \\
								-1.8892  -6.3730 \\
								-1.9560  -6.6450 \\
								-2.0149  -6.8844 \\
							};
							
							\addplot[domain=-1.25:-2.1,variable=h,color=blue,densely dotted,very thick] {-0.968031470809855 + 4.09982387298344*h};
							\addplot [color=blue!80!black, only marks, mark=*, mark options={solid}, forget plot]
							table[row sep=crcr]{
								-1.3100  -6.2434 \\
								-1.5127  -7.1549 \\
								-1.6565  -7.7575 \\
								-1.7681  -8.2215 \\
								-1.8593  -8.5927 \\
								-1.9363  -8.9021 \\
								-2.0031  -9.1741 \\
								-2.0620  -9.4281 \\
							};
						\end{axis}			
					\end{tikzpicture}	
				}
				\caption{Experiments with the methods {\em Conic} (square red data) and \emph{ConicCurv} (circle blue data). (a) Order of condition number for \emph{Conic} $\approx O(h^{-5})$ and for \emph{ConicCurv} $\approx O(h^{-2})$. (b) Convergence order of \emph{Conic} $\approx O(h^{4})$ and for \emph{ConicCurv} $\approx O(h^{4})$.}
				\label{fig:ConicExper}
			\end{figure}
			
			\paragraph{\textbf{\emph{Conic}}}
			For $k=0,1,2, \ldots,7$, it is computed the implicit equation of the conic interpolating the 5 points $\{P_j^k=(t_j^k ,f(t_j^k )), j=1,\ldots,5\}$, with $t_j^k \in I^{k}$. The condition number of the $5\times 5$ matrix of the linear system of equations whose solution are the coefficients of the implicit equation of the interpolating conic is stored in $\texttt{Cond}_k$ and the relative error between the curvature at $t_3^k$ of the interpolating conic and the exact curvature value is assigned to $\texttt{RE}_k$.
			
			The line fitting the points $(\log(h_k),\log(\texttt{Cond}_k))$ for $k \geq 2$ has the slope $-4.831$, hence the order of the condition number of the $5\times 5$ matrix is close to $O(h^{-5})$; see Figure \ref{subfig:condExp} (red data: square points and dashed lines).
			
			The line fitting the points $(\log(h_k), \log(\texttt{RE}_k))$ for $k \geq 2$ has the slope $4.084$. Therefore, the approximation order of the curvature estimation with \emph{Conic} is close to $O(h^{4})$. See Figure \ref{subfig:REExp} (red data: square points and dashed lines).

			\paragraph{\textbf{\emph{ConicCurv}}}
			For $k=0,1, \ldots,7$, seven points $\{P_j^k=(t_j^k ,f(t_j^k )), j=1,\ldots,7\}$, with $t_j^k \in I^{k}$ are computed. Tangent lines are assigned to the points $\{P_j^k, j=2,3,4\}$, by means of ABFH applied to the points $P^k_{j-2}$, $P^k_{j-1}$, $P^k_{j}$, $P^k_{j+1}$, $P^k_{j+2}$ for $j=3,4,5$, which requires the solution of seven $2 \times 2$ linear systems of equations.
			Then, the auxiliary points $Q^k_{34}= r_3^k \bigwedge r_4^k $ and $Q^k_{45}=r_4^k \bigwedge r_5^k $ are computed as the solutions of two $2 \times 2$ linear systems of equations. The maximum of the condition numbers of the previous nine $2\times 2$ matrices is assigned to $ \texttt{Cond}_k$.
			
			The line fitting the points $(\log(h_k),\log(\texttt{Cond}_k))$ for $k \geq 2$ has the slope $ -2.105 $, hence the order of the maximum of the condition numbers of the $2\times 2$ matrices necessary to compute the curvature estimate by means of \emph{ConicCurv} is close to $O(h^{-2})$; see Figure \ref{subfig:condExp} (blue data: circle points and dotted lines).
			
			The estimate of curvature value at $P_3$ is the average of the curvature values at $P_3$ of the conics interpolating the points $\{ P_j^k, \,j=2,3,4\}$ and the tangent directions assigned by means of ABFH to $\{ P_j^k, \,j=2,3\}$ and to $\{ P_j^k, \,j=3,4\}$, respectively. The relative error of the curvature value at $P_3$ estimated with \emph{ConicCurv} and the exact value is assigned to $ \texttt{RE}_k$.
			
			The line fitting the points $(\log(h_k), \log(\texttt{RE}_k))$ for $k \geq 2$ has the slope $4.09.$ Thus, the approximation order of the curvature estimation with \emph{ConicCurv} is close to $O(h^{4})$.; see Figure \ref{subfig:REExp} (blue data: circle points and dotted lines).

			\begin{remark}
				\begin{itemize}
					\item[$\triangleright$] In a neighborhood of point $P_3$ the parametrization $\bc(t)=(t,f(t))$ behaves very close to the arc-length parametrization.
					\item[$\triangleright$] The computational overhead and the numerical condition of \emph{Conic} are mainly determined by the solution of the linear system of equations of size $5 \times 5$, that provides the coefficients of the implicit equation of the interpolating conic. The computational overhead and the numerical condition of \emph{ConicCurv} are mainly given by the solution of nine linear systems of equations of size $2 \times 2$.
					\item[$\triangleright$] The relative error of the curvature value at $P_3$ estimated with \emph {ConicCurv} and the exact value is smaller than the relative error between the curvature at $t_3^k$ of the interpolating conic and the exact curvature value.
				\end{itemize}
			\end{remark}

			%%%%%%%%%%%%%%%%%%%%%%%%%%%%%%%%%%%%%
			\section{Application to corner estimation of $L$-curves}
			\label{sec:Lcurv}
			
			There are applications where the given set of points has no geometrical meaning for a curve design or a curve approximation, instead they represent data obtained from a particular problem, such as the residual and solution norms of computed solutions dependent on a parameter. The choice of such a parameter is crucial to obtain accurate solutions to ill-posed problems.
			
			Let us consider for instance the linear least-squares problem
			\begin{equation*} \label{eq:lsq}
				\min_x \; \lVert Ax - b \rVert_2^2.
			\end{equation*}
			When the matrix $ A $ is ill-conditioned, i.e, its singular values decay rapidly to zero without a significant gap, it is an \emph{ill-posed problem} for which the solution $x = A^\dagger b$ is not stable, where $A^\dagger$ is the Moore-Penrose pseudo-inverse. One of the best-known solution strategies is the Tikhonov regularization, defined by
			\begin{equation} \label{eq:reg_Tij_g}
				\min_{x} \; \lVert Ax - b \rVert_2^2 + \alpha^2 \lVert L(x - x_0) \rVert_2^2,
			\end{equation}
			where $\alpha$ is the regularization parameter chosen by the user. The vector $x_0$ is an {\em a priori} estimation of the solution. If $x_0$ is unknown, then it can be set equal the null vector. The regularization \eqref {eq:reg_Tij_g}, in its standard form with $L = I$ (identity matrix), reduces to
			\begin{equation} \label{eq:reg_Tij}
				\min_{x} \; \lVert Ax - b \rVert_2^2 + \alpha^2 \lVert x \rVert_2^2.
			\end{equation}
			
			There are several strategies to locate the optimal value of the parameter $\alpha$, one of them is to find the point of maximum curvature (corresponding to the best regularization parameter) of the parametric graph of the norm of the solution versus the corresponding residual norm.
			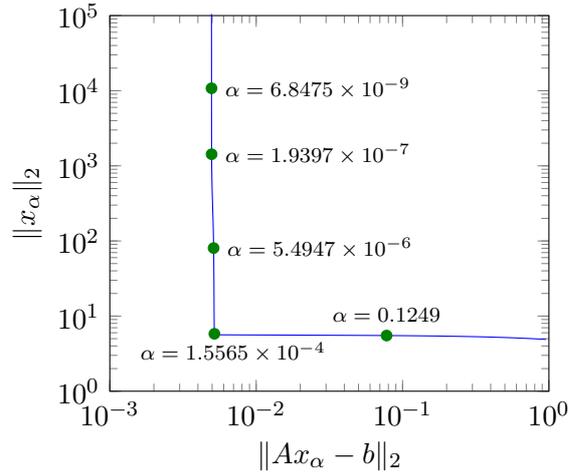
\begin{figure}
				\centering
				\begin{tikzpicture}
					\pgfplotsset{compat=newest,
						label shift=-.01*.35\textwidth,
					}	
					\begin{axis}[%
						width=.35\textwidth,
						clip=false,
						scale only axis,
						xmode=log,
						xmin=0.001, xmax=1,
						xminorticks=true,
						xlabel={$\lVert A x_\alpha - b \rVert_2$},
						ymode=log,
						ymin=1, ymax=100000,
						yminorticks=true,
						ylabel={$\lVert x_\alpha \rVert_2$}
						]
						
						\addplot [color=blue, solid, forget plot]
						table[row sep=crcr]{
							%1.2245577097565 4.7292820134482\\
							0.954126597890215 4.89176218099661\\
							0.736499267840419 5.02913004821045\\
							0.564024979076664 5.14317127117825\\
							0.428936137028368 5.2365009446436\\
							0.324129255931391 5.31196150793536\\
							0.243576957724415 5.3722423777672\\
							0.182344949692846 5.41977086858788\\
							0.136414475431614 5.45674178928488\\
							0.1024859502335 5.48515705848457\\
							0.0778273227964999 5.50683100201845\\
							0.0601641012440697 5.5233751201958\\
							0.0476047417620769 5.5361840719849\\
							0.0386076592673861 5.54642993243924\\
							0.0319810351913418 5.55506172237289\\
							0.0268770011530711 5.56280896062493\\
							0.0227478493934536 5.57019246073153\\
							0.0192717784415925 5.57754243312906\\
							0.0162778562859894 5.58501703233151\\
							0.0136886755903876 5.59261727683399\\
							0.011480598599605 5.60020718289301\\
							0.0096543979965732 5.60755412163217\\
							0.00821158201065356 5.61439262897333\\
							0.00713636159064419 5.62049471956448\\
							0.00638672259602757 5.62572066744732\\
							0.00589827136343519 5.63003462069004\\
							0.00559871799382051 5.63348870735531\\
							0.00542353532297301 5.6361913620992\\
							0.00532431461811625 5.63827520474563\\
							0.00526905384200871 5.63987276665095\\
							0.00523837145315345 5.6411019083551\\
							0.00522118462604428 5.64205969444633\\
							0.00521136434599525 5.64282275137379\\
							0.00520556690371205 5.64345180295045\\
							0.00520196956615163 5.64399748638643\\
							0.0051995810233987 5.64450438420877\\
							0.00519787391583046 5.64501125145091\\
							0.00519658151315699 5.64554779391097\\
							0.00519557533860351 5.64613139852468\\
							0.0051947880018116 5.64676922769758\\
							0.00519416870372807 5.64747025758269\\
							0.00519366430525251 5.64826906606907\\
							0.00519321648847034 5.64926274115103\\
							0.00519276478127086 5.65066787505583\\
							0.00519224877494546 5.65291613904442\\
							0.00519160769795254 5.65682238894038\\
							0.00519077860912823 5.66387677331709\\
							0.00518969554801417 5.67672614618803\\
							0.00518829210671749 5.69989932994501\\
							0.00518650971964987 5.74074447428797\\
							0.00518431312812319 5.81030297986044\\
							0.00518171183593991 5.92340093364351\\
							0.0051787811184541 6.09681636185659\\
							0.00517566994311733 6.34471267668501\\
							0.0051725819431336 6.67242465310315\\
							0.0051697263775772 7.07236964027257\\
							0.00516725660381201 7.52635366544179\\
							0.00516522692323339 8.01523815739177\\
							0.00516358845182074 8.53296980254347\\
							0.00516221833473338 9.10145602978878\\
							0.00516095897924498 9.78514426774598\\
							0.00515964718207367 10.705131132486\\
							0.005158127002864 12.0492079809947\\
							0.00515625057183171 14.0701019724741\\
							0.00515387311198033 17.0703452390982\\
							0.00515084547778628 21.3899018914894\\
							0.00514700405927156 27.4184004410637\\
							0.00514215762941149 35.635292910574\\
							0.00513607569725605 46.6597373374194\\
							0.00512849143006062 61.2803132019322\\
							0.00511913742571156 80.4296259912022\\
							0.0051078268461492 105.074333113161\\
							0.00509457290229496 136.011679685268\\
							0.00507970941444078 173.597905437746\\
							0.00506394474748038 217.482409294412\\
							0.00504827716015405 266.474962455627\\
							0.00503375450484814 318.688079746364\\
							0.00502116643211154 372.018201913965\\
							0.00501082868845679 424.868691850106\\
							0.00500257118893329 476.87868309656\\
							0.00499590731371817 529.394770801363\\
							0.00499026716740764 585.470745780444\\
							0.0049851866185175 649.226232535192\\
							0.00498040686118266 724.520967188765\\
							0.00497588599501611 813.281942788557\\
							0.0049717405099246 914.283587937844\\
							0.00496814424863756 1023.15717165674\\
							0.00496522649259012 1133.72363613257\\
							0.00496301228177653 1239.99672838353\\
							0.0049614224085809 1337.99640355571\\
							0.00496031484639566 1426.87592859354\\
							0.00495953380029576 1509.40683651922\\
							0.00495894327296009 1592.15592004489\\
							0.00495844099773453 1685.52577815608\\
							0.00495795967051589 1803.28478747341\\
							0.00495746343338765 1960.67210610386\\
							0.00495694367201812 2170.43360031096\\
							0.00495641406436815 2437.76882869879\\
							0.0049559029431188 2756.92474825825\\
							0.0049554423335814 3111.83938520154\\
							0.00495505638606445 3480.9113773242\\
							0.00495475397361786 3843.91513876541\\
							0.00495452830684499 4188.66750474551\\
							0.00495436208711181 4516.14276576702\\
							0.00495423411518361 4844.41739917289\\
							0.0049541240946057 5212.79438708549\\
							0.00495401476490351 5686.5509483657\\
							0.0049538921927532 6359.68223237294\\
							0.00495374544933193 7349.97784745324\\
							0.00495356655004405 8783.2583067941\\
							0.00495335096533373 10772.6721799079\\
							0.00495309840424193 13403.8702060378\\
							0.00495281299193708 16732.7962273442\\
							0.00495250170028978 20801.6316254334\\
							0.00495217043542621 25683.6538867393\\
							0.00495181858629981 31567.4514806202\\
							0.00495143407595693 38876.5189508154\\
							0.00495099067362584 48396.4490699533\\
							0.00495044759265969 61367.5368123089\\
							0.00494974991769249 79521.3859690701\\
							0.00494882848610869 105092.870002254\\
							%0.00494759915758821 140863.798212704\\
							%0.00494596292155735 190249.16093182\\
						};
						
						\addplot [color=green!50!black, only marks, mark=*, mark options={solid}, forget plot]
						table[row sep=crcr]{
							0.0778273227964999 5.50683100201845\\
							%0.00523837145315345 5.6411019083551\\
							0.00518431312812319 5.81030297986044\\
							0.00511913742571156 80.4296259912022\\
							0.00496031484639566 1426.87592859354\\
							0.00495335096533373 10772.6721799079\\
						};
						
						\node[above=5pt, inner sep=0mm, text=black]
						at (axis cs:0.0778273227964999,5.50683100201845,0) {\scriptsize $\alpha = 0.1249$};
						%\node[below=5pt, inner sep=0mm, text=black]
						%at (axis cs:0.00523837145315345,5.6411019083551,0) {\scriptsize $\alpha = 4.4091\times10^{-3}$};
						\node[right=7pt,below=3pt, inner sep=0mm, text=black]
						at (axis cs:0.00518431312812319,5.81030297986044,0) {\scriptsize $\alpha = 1.5565\times10^{-4}$};
						\node[right=5pt, inner sep=0mm, text=black]
						at (axis cs:0.00511913742571156,80.4296259912022,0) {\scriptsize $\alpha = 5.4947\times10^{-6}$};
						\node[right=5pt, inner sep=0mm, text=black]
						at (axis cs:0.00496031484639566,1426.87592859354,0) {\scriptsize $\alpha = 1.9397\times10^{-7}$};
						\node[right=5pt, inner sep=0mm, text=black]
						at (axis cs:0.00495335096533373,10772.6721799079,0) {\scriptsize $\alpha = 6.8475\times10^{-9}$};
					\end{axis}
				\end{tikzpicture}%
				\caption{Graph of a $L$-curve and some parametric values.}
				\label{fig:Lcurv}
			\end{figure}
			
			The shape of this curve suggests the letter $L$ (see Fig. \ref{fig:Lcurv}), therefore, it is called the $L$ -curve. It is customary to process the $L$-curve in $\boldsymbol{\log}$-$\boldsymbol{\log}$ scale
			\begin{equation} \label{eq:lcurve}
				c(\alpha) = \left(\log\lVert Ax_\alpha - b \rVert_2,\log\lVert x_\alpha \rVert_2\right)
			\end{equation}
			for the calculation of the optimal parameter \cite{Hansen_pruning,Hansen}.
			
			Since the evaluation of points on the $L$-curve is computationally expensive, it is then preferable to estimate the corner point from a small sample of points on the $L$-curve. In \cite{CMCR00,CGG02,KiRa19} are proposed algorithms to estimate the location of the corner point of a L-curve. In this context, if \emph{ConicCurv} is applied to assign curvature values to the sample points and to find among them the one corresponding to the maximum curvature, then it happens to be a successful corner point estimation method. Recall that in this case \emph{ConicCurv} furnishes us a \emph{derivative-free} $L$-curve corner location method, since we do not need to estimate derivatives as in \cite{Hansen}.
			
			In order to validate this proposal, numerical experiments are performed with the test problems of the MATLAB package \textbf{regutool} (Regularization Tools) \cite{Regutool}. For each problem, several samples of points were selected on the respective $L$-curve \eqref{eq:lcurve}, considering such points in $\boldsymbol{\log}$-$\boldsymbol{\log}$ scale. The obtained results show the reliability of the algorithm based on \emph{ConicCurv} to obtain the point of maximum curvature and thus, the optimal parameter for regularization. For simplicity, only results associated with two test problems are exposed below: \texttt{shaw(32)} and \texttt{heat(64)}, representatives of problems identified as easy and problematic, respectively \cite{Hansen_pruning}. From these two test problems we take the matrix $A$ and the vector $b$ of \eqref{eq:lcurve}, being $x_\alpha$ solution of \eqref{eq:reg_Tij}.
			
			In fact, for the points shown in Fig. \ref{subfig:shaw_exp}, selected from the $L$-curve of the problem \texttt{shaw(32)} the estimated values are obtained with \emph{ConicCurv} and shown in Table \ref{tab:Lcurv}. If such estimates are compared with Fig. \ref{subfig:shaw_exp}, the correctness of selecting the point with label $6$ as corner point is verified.

			\begin{figure}
				\centering
				\subcaptionbox{\texttt{shaw(32)} \label{subfig:shaw_exp}}[.45\textwidth][c]{
					\begin{tikzpicture}	
						\pgfplotsset{compat=newest,
							label shift=-.01*.35\textwidth,
						}	
						\begin{axis}[%
							width=.35\textwidth,
							scale only axis,
							xmode=log,
							xmin=0.001,
							xmax=1.1,
							xminorticks=true,
							xlabel={$\lVert A x_\alpha - b \rVert_2$},
							ymode=log,
							ymin=1,
							ymax=100000,
							yminorticks=true,
							ylabel={$\lVert x_\alpha \rVert_2$}
							]
							
							\addplot [color=blue, solid, forget plot]
							table[row sep=crcr]{
								1.2245577097565 4.7292820134482\\
								0.954126597890215 4.89176218099661\\
								0.736499267840419 5.02913004821045\\
								0.564024979076664 5.14317127117825\\
								0.428936137028368 5.2365009446436\\
								0.324129255931391 5.31196150793536\\
								0.243576957724415 5.3722423777672\\
								0.182344949692846 5.41977086858788\\
								0.136414475431614 5.45674178928488\\
								0.1024859502335 5.48515705848457\\
								0.0778273227964999 5.50683100201845\\
								0.0601641012440697 5.5233751201958\\
								0.0476047417620769 5.5361840719849\\
								0.0386076592673861 5.54642993243924\\
								0.0319810351913418 5.55506172237289\\
								0.0268770011530711 5.56280896062493\\
								0.0227478493934536 5.57019246073153\\
								0.0192717784415925 5.57754243312906\\
								0.0162778562859894 5.58501703233151\\
								0.0136886755903876 5.59261727683399\\
								0.011480598599605 5.60020718289301\\
								0.0096543979965732 5.60755412163217\\
								0.00821158201065356 5.61439262897333\\
								0.00713636159064419 5.62049471956448\\
								0.00638672259602757 5.62572066744732\\
								0.00589827136343519 5.63003462069004\\
								0.00559871799382051 5.63348870735531\\
								0.00542353532297301 5.6361913620992\\
								0.00532431461811625 5.63827520474563\\
								0.00526905384200871 5.63987276665095\\
								0.00523837145315345 5.6411019083551\\
								0.00522118462604428 5.64205969444633\\
								0.00521136434599525 5.64282275137379\\
								0.00520556690371205 5.64345180295045\\
								0.00520196956615163 5.64399748638643\\
								0.0051995810233987 5.64450438420877\\
								0.00519787391583046 5.64501125145091\\
								0.00519658151315699 5.64554779391097\\
								0.00519557533860351 5.64613139852468\\
								0.0051947880018116 5.64676922769758\\
								0.00519416870372807 5.64747025758269\\
								0.00519366430525251 5.64826906606907\\
								0.00519321648847034 5.64926274115103\\
								0.00519276478127086 5.65066787505583\\
								0.00519224877494546 5.65291613904442\\
								0.00519160769795254 5.65682238894038\\
								0.00519077860912823 5.66387677331709\\
								0.00518969554801417 5.67672614618803\\
								0.00518829210671749 5.69989932994501\\
								0.00518650971964987 5.74074447428797\\
								0.00518431312812319 5.81030297986044\\
								0.00518171183593991 5.92340093364351\\
								0.0051787811184541 6.09681636185659\\
								0.00517566994311733 6.34471267668501\\
								0.0051725819431336 6.67242465310315\\
								0.0051697263775772 7.07236964027257\\
								0.00516725660381201 7.52635366544179\\
								0.00516522692323339 8.01523815739177\\
								0.00516358845182074 8.53296980254347\\
								0.00516221833473338 9.10145602978878\\
								0.00516095897924498 9.78514426774598\\
								0.00515964718207367 10.705131132486\\
								0.005158127002864 12.0492079809947\\
								0.00515625057183171 14.0701019724741\\
								0.00515387311198033 17.0703452390982\\
								0.00515084547778628 21.3899018914894\\
								0.00514700405927156 27.4184004410637\\
								0.00514215762941149 35.635292910574\\
								0.00513607569725605 46.6597373374194\\
								0.00512849143006062 61.2803132019322\\
								0.00511913742571156 80.4296259912022\\
								0.0051078268461492 105.074333113161\\
								0.00509457290229496 136.011679685268\\
								0.00507970941444078 173.597905437746\\
								0.00506394474748038 217.482409294412\\
								0.00504827716015405 266.474962455627\\
								0.00503375450484814 318.688079746364\\
								0.00502116643211154 372.018201913965\\
								0.00501082868845679 424.868691850106\\
								0.00500257118893329 476.87868309656\\
								0.00499590731371817 529.394770801363\\
								0.00499026716740764 585.470745780444\\
								0.0049851866185175 649.226232535192\\
								0.00498040686118266 724.520967188765\\
								0.00497588599501611 813.281942788557\\
								0.0049717405099246 914.283587937844\\
								0.00496814424863756 1023.15717165674\\
								0.00496522649259012 1133.72363613257\\
								0.00496301228177653 1239.99672838353\\
								0.0049614224085809 1337.99640355571\\
								0.00496031484639566 1426.87592859354\\
								0.00495953380029576 1509.40683651922\\
								0.00495894327296009 1592.15592004489\\
								0.00495844099773453 1685.52577815608\\
								0.00495795967051589 1803.28478747341\\
								0.00495746343338765 1960.67210610386\\
								0.00495694367201812 2170.43360031096\\
								0.00495641406436815 2437.76882869879\\
								0.0049559029431188 2756.92474825825\\
								0.0049554423335814 3111.83938520154\\
								0.00495505638606445 3480.9113773242\\
								0.00495475397361786 3843.91513876541\\
								0.00495452830684499 4188.66750474551\\
								0.00495436208711181 4516.14276576702\\
								0.00495423411518361 4844.41739917289\\
								0.0049541240946057 5212.79438708549\\
								0.00495401476490351 5686.5509483657\\
								0.0049538921927532 6359.68223237294\\
								0.00495374544933193 7349.97784745324\\
								0.00495356655004405 8783.2583067941\\
								0.00495335096533373 10772.6721799079\\
								0.00495309840424193 13403.8702060378\\
								0.00495281299193708 16732.7962273442\\
								0.00495250170028978 20801.6316254334\\
								0.00495217043542621 25683.6538867393\\
								0.00495181858629981 31567.4514806202\\
								0.00495143407595693 38876.5189508154\\
								0.00495099067362584 48396.4490699533\\
								0.00495044759265969 61367.5368123089\\
								0.00494974991769249 79521.3859690701\\
								0.00494882848610869 105092.870002254\\
								0.00494759915758821 140863.798212704\\
								0.00494596292155735 190249.16093182\\
							};
							
							\addplot [color=red!50!black, only marks, mark=*, mark options={solid}, forget plot]
							table[row sep=crcr]{
								0.736499267840419 5.02913004821045\\
								0.324129255931391 5.31196150793536\\
								0.0601641012440697 5.5233751201958\\
								0.0096543979965732 5.60755412163217\\
								0.00518431312812319 5.81030297986044\\
								0.00511913742571156 80.4296259912022\\
								0.00498040686118266 724.520967188765\\
								0.00495452830684499 4188.66750474551\\
								0.00495335096533373 10772.6721799079\\
								0.00494974991769249 79521.3859690701\\
							};
							
							\node[below right=3pt, inner sep=0mm, text=black]
							at (axis cs:0.00494974991769249, 79521.3859690701) {1};
							\node[right=3pt, inner sep=0mm, text=black]
							at (axis cs:0.00495335096533373, 10772.6721799079) {2};
							\node[right=3pt, inner sep=0mm, text=black]
							at (axis cs:0.00495452830684499, 4188.66750474551) {3};
							\node[right=3pt, inner sep=0mm, text=black]
							at (axis cs:0.00498040686118266, 724.520967188765) {4};
							\node[right=3pt, inner sep=0mm, text=black]
							at (axis cs:0.00511913742571156, 80.4296259912022) {5};
							\node[below left=3pt, inner sep=0mm, text=black]
							at (axis cs:0.00518431312812319, 5.81030297986044) {6};
							\node[above=3pt, inner sep=0mm, text=black]
							at (axis cs:0.0096543979965732, 5.60755412163217) {7};
							\node[above=3pt, inner sep=0mm, text=black]
							at (axis cs:0.0601641012440697, 5.5233751201958) {8};
							\node[above=3pt, inner sep=0mm, text=black]
							at (axis cs:0.324129255931391, 5.31196150793536) {9};
							\node[above=3pt, inner sep=0mm, text=black]
							at (axis cs:0.736499267840419, 5.02913004821045) {10};		
						\end{axis}
					\end{tikzpicture}%
				}
				\hfill
				\subcaptionbox{\texttt{heat(64)} \label{subfig:heat_64_exp}}[.45\textwidth][c]{
					\begin{tikzpicture}	
						\pgfplotsset{compat=newest,
							label shift=-.01*.35\textwidth,
						}	
						\begin{axis}[%
							width=.35\textwidth,
							scale only axis,
							xmode=log,
							xmin=0.0005,
							xmax=.5,
							xminorticks=true,
							xlabel={$\lVert A x_\alpha - b \rVert_2$},
							ymode=log,
							ymin=1,
							ymax=1000,
							yminorticks=true,
							ylabel={$\lVert x_\alpha \rVert_2$}
							]
							\addplot [
							color=blue,
							solid,
							forget plot
							]
							table[row sep=crcr]{
								0.111409441310603 0.982386261264114\\
								0.093059069683785 1.07233732681117\\
								0.0772665776416634 1.15535305938174\\
								0.0640612929324269 1.23042750601746\\
								0.0533029387682845 1.29736524112059\\
								0.0446963295847756 1.35685141406477\\
								0.0378416590539971 1.41034309151392\\
								0.03231303375275 1.4597781206599\\
								0.0277380875252475 1.50714727936887\\ %
								0.0238477556987663 1.55403382578024\\
								0.0204843083847339 1.60126109261423\\
								0.0175780275189741 1.64876905359518\\
								0.0151108577260903 1.69576040936284\\
								0.0130810942145684 1.74105302112218\\
								0.0114767402467759 1.78350175940068\\ %
								0.010261504087104 1.82234395848519\\
								0.0093748977865048 1.85737854387517\\
								0.00874344688888387 1.88897688373973\\
								0.00829522032358548 1.91799763227843\\
								0.00796999909112223 1.94570488662066\\
								0.00772259154642207 1.97376400003737\\
								0.00752142464803156 2.00433663362418\\
								0.00734543966114189 2.04025274319715\\
								0.00718105865036492 2.08522453924309\\
								0.00701974507522381 2.14408503951262\\
								0.00685612913782712 2.22306592920795\\
								0.00668660839527577 2.33013302913964\\
								0.00650842797208697 2.47531774817005\\
								0.00631922368655066 2.67087337814808\\
								0.006116867182169 2.93112333464927\\
								0.00589942398354664 3.27204505301067\\
								0.00566522903481593 3.71059757198969\\
								0.00541327696337126 4.26340317082079\\
								0.00514402328763536 4.94424558488473\\
								0.0048602682480631 5.76075431853\\ %
								0.00456739562058909 6.71223760856881\\
								0.00427237073020014 7.79115582374944\\
								0.00398169754216234 8.98894741068947\\ %
								0.00369943327893481 10.30375159732\\
								0.0034265249918518 11.7453039311403\\
								0.00316189141288018 13.332829734133\\ %%
								0.00290446285953792 15.0856982819019\\
								0.00265480151501105 17.0120373064005\\ %
								0.00241539429591202 19.1027666562775\\
								0.00218979221449353 21.334153967058\\
								0.00198155386197193 23.674277270091\\ %
								0.00179381606841929 26.0855247392989\\
								0.00162946081570833 28.5202421817235\\
								0.0014911409127584 30.9151856624955\\
								0.0013805930631756 33.1937201838212\\
								0.00129754926405569 35.2787380425632\\
								0.00123919891237121 37.1099430711178\\
								0.00120078149271947 38.6558161893224\\
								0.00117691182336679 39.9152940087574\\
								0.00116277752896923 40.9110088502272\\ %
								0.00115471939031207 41.679198941049\\
								0.00115025600100937 42.2605909745514\\
								0.00114783622788471 42.6941865646687\\
								0.00114654485873524 43.0140012599584\\
								0.00114586352666317 43.2479650950089\\
								0.00114550701286408 43.4180952615794\\
								0.00114532157143981 43.5412651999026\\
								0.00114522552588196 43.6301531672586\\
								0.00114517593377016 43.6941531855501\\
								0.00114515038380818 43.7401571886526\\
								0.00114513724120279 43.7731859325775\\
								0.00114513048845004 43.7968786897101\\
								0.00114512702165681 43.8138639245114\\
								0.00114512524287472 43.8260351866816\\
								0.00114512433057633 43.8347540957391\\
								0.00114512386281777 43.8409984875122\\
								0.00114512362303721 43.8454699290886\\
								0.00114512350014059 43.8486714359447\\
								0.0011451234371583 43.850963491406\\
								0.00114512340488354 43.8526043452466\\
								0.00114512338834553 43.8537789618207\\
								0.00114512337987158 43.8546197935134\\
								0.00114512337552971 43.855221677123\\
								0.00114512337330508 43.8556525103207\\
								0.00114512337216527 43.8559609007871\\
								0.00114512337158128 43.856181644971\\
								0.00114512337128208 43.8563396515531\\
								0.00114512337112878 43.8564527507488\\
								0.00114512337105024 43.8565337056526\\
								0.00114512337101 43.8565916521898\\
								0.00114512337098938 43.8566331299709\\
								0.00114512337097881 43.8566628202959\\
								0.0011451233709734 43.8566840745777\\
								0.00114512337097063 43.8566992928712\\
								0.0011451233709692 43.8567101953644\\
								0.00114512337096847 43.8567180177638\\
								0.0011451233709681 43.856723653203\\
								0.0011451233709679 43.8567277578291\\
								0.0011451233709678 43.8567308342046\\
								0.00114512337096774 43.8567333062788\\
								0.0011451233709677 43.856735602862\\
								0.00114512337096766 43.8567382755601\\
								0.00114512337096763 43.8567421967591\\
								0.00114512337096759 43.8567489230042\\
								0.00114512337096753 43.856761387751\\
								0.00114512337096745 43.8567852416843\\
								0.00114512337096735 43.8568314603199\\
								0.0011451233709672 43.8569214274978\\
								0.00114512337096699 43.8570968529861\\
								0.00114512337096671 43.8574391262163\\
								0.0011451233709663 43.858107086904\\
								0.00114512337096574 43.8594107364558\\
								0.00114512337096496 43.8619550756273\\
								0.00114512337096386 43.8669207240053\\
								0.00114512337096233 43.876611161107\\
								0.00114512337096018 43.8955190245078\\
								0.00114512337095719 43.9324004212109\\
								0.00114512337095301 44.0042974180716\\
								0.00114512337094717 44.1442904842669\\
								0.00114512337093901 44.4162614632488\\
								0.00114512337092761 44.9423603892636\\
								0.00114512337091169 45.9518656380709\\
								0.00114512337088944 47.8609439116376\\
								0.00114512337085836 51.3832083030433\\
								0.00114512337081493 57.641324803245\\
								0.00114512337075426 68.2219167867639\\
								0.00114512337066951 85.1674284588299\\
								0.00114512337055109 111.026099664884\\
								0.00114512337038566 149.093927088899\\
								0.00114512337015454 203.853806691252\\ %
								0.00114512336983164 281.567114159198\\
								0.00114512336938053 391.037151345494\\
								0.0011451233687503 544.631064656444\\ %
								0.00114512336786981 759.687824565335\\
								0.00114512336663969 1060.4799485328\\
							};
							
							\addplot [color=red!50!black, only marks, mark=*, mark options={solid}, forget plot]
							table[row sep=crcr]{
								0.0277380875252475 1.50714727936887\\
								0.0114767402467759 1.78350175940068\\
								0.00701974507522381 2.14408503951262\\
								0.0048602682480631 5.76075431853\\
								0.00398169754216234 8.98894741068947 \\
								%0.00316189141288018 13.332829734133\\
								0.00265480151501105 17.0120373064005\\
								0.00198155386197193 23.674277270091\\
								0.00116277752896923 40.9110088502272\\
								0.00114512337015454 203.853806691252\\
								0.0011451233687503 544.631064656444\\
							};
							
							\node[right=3pt, inner sep=0mm, text=black]
							at (axis cs:0.0011451233687503, 544.631064656444) {1};
							\node[right=3pt, inner sep=0mm, text=black]
							at (axis cs:0.00114512337015454, 203.853806691252) {2};
							\node[right=3pt, inner sep=0mm, text=black]
							at (axis cs:0.00116277752896923, 40.9110088502272) {3};
							\node[right=3pt, inner sep=0mm, text=black]
							at (axis cs:0.00198155386197193, 23.674277270091) {4};
							\node[right=3pt, inner sep=0mm, text=black]
							at (axis cs:0.00265480151501105, 17.0120373064005) {5};
							\node[right=3pt, inner sep=0mm, text=black]
							at (axis cs:0.00398169754216234, 8.98894741068947) {6};
							\node[right=3pt, inner sep=0mm, text=black]
							at (axis cs:0.0048602682480631, 5.76075431853) {7};
							\node[below left=3pt, inner sep=0mm, text=black]
							at (axis cs:0.00701974507522381, 2.14408503951262) {8};
							\node[above=3pt, inner sep=0mm, text=black]
							at (axis cs:0.0114767402467759, 1.78350175940068) {9};
							\node[above=3pt, inner sep=0mm, text=black]
							at (axis cs:0.0277380875252475, 1.50714727936887) {10};
						\end{axis}
					\end{tikzpicture}%
				}
				\caption{Sample values to estimate $\alpha$}
				\label{fig:Lcurv_prob}
			\end{figure}
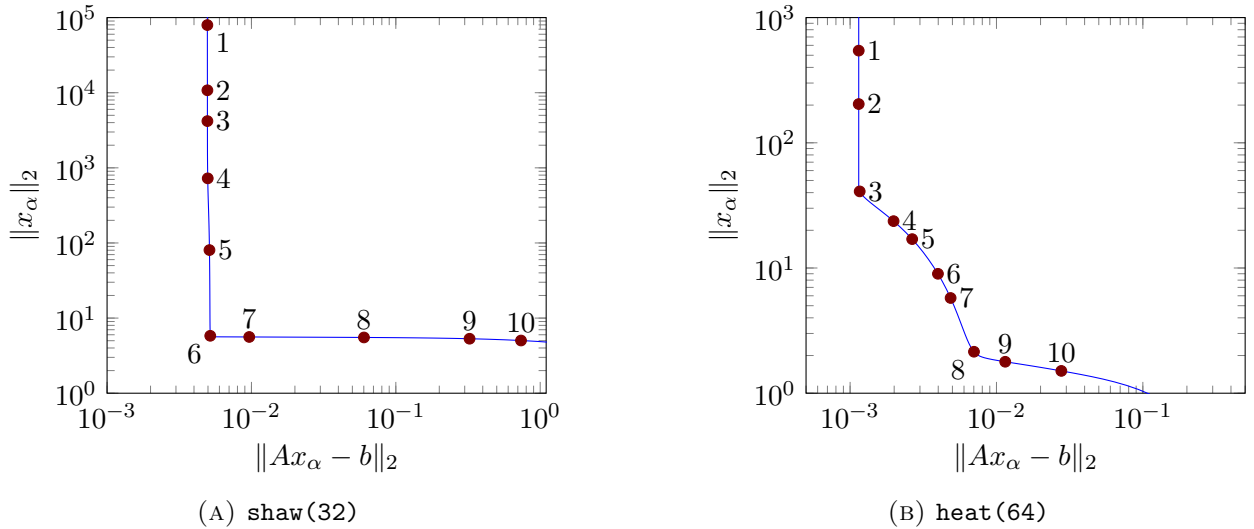

			In the case of the problem \texttt{heat(64)}, considering the points shown in the Table \ref{tab:heat} in $\boldsymbol{\log}$-$\boldsymbol{\log}$ scale and shown in Fig. \ref{subfig:heat_64_exp}, the existence of two points with local maximum of curvature (in the respective neighborhoods of the points labeled $3$ and $8$) can be visually checked.
			\begin{table}[h!]
				\centering
				\caption{Points on the $L$-curve (\texttt{shaw(32)}) and estimated curvature values. The identified corner is highlighted in boldface.}
				\footnotesize
				\begin{tabular}{|l|>{$}l<{$}|>{$}l<{$}|} \hline 
					point index & (\log(\lVert Ax_\alpha - b \rVert_2),\log(\lVert x_\alpha \rVert_2)) & \text{ Estimated curvature values} \\ \thline
					1 & (-5.3084, 11.2838) & 1.1732\times 10^{-5} \\ \hline
					2 & (-5.3077,  9.2848) & 8.2412\times 10^{-5} \\ \hline
					3 & (-5.3075,  8.3401) & 8.7799\times 10^{-4} \\ \hline 
					4 & ( -5.3022,  6.5855) & 3.487\times 10^{-4} \\ \hline
					5 & (-5.2748,  4.3874) & 3.9532\times 10^{-3} \\ \hline  \hline
					\textbf{6} & \mathbf{(-5.2621,  1.7596)} & \mathbf{39.773} \\ \hline \hline
					7 & (-4.6403,  1.7241) & 8.2219 \\ \hline
					8 & (-2.8107,  1.7089) & 1.2114\times 10^{-3} \\ \hline
					9 & (-1.1266,  1.6699) & 7.8662\times 10^{-4} \\ \hline
					10 & (-0.3058,  1.6152) & 1.0444\times 10^{-5} \\ \hline 
				\end{tabular}
				\label{tab:Lcurv}
			\end{table}		
			
			\begin{table}[ht]
				\centering
				\caption{Points on the $L$-curve (\texttt{heat(64)}) and curvature values estimated. The identified corner is highlighted in boldface.}
				\footnotesize
				\begin{tabular}{|l|>{$}l<{$}|>{$}l<{$}|} \hline
					point index & (\log(\lVert Ax_\alpha - b \rVert_2),\log(\lVert x_\alpha \rVert_2)) & \text{Estimated curvature values} \\ \thline
					1 &  (-6.7722, 6.3001) &  2\times 10^{-4} \\ \hline
					2 &  (-6.7722, 4.0542) &  7.56\times 10^{-2} \\ \hline
					3 &  (-6.7569, 3.7114) &  28.51  				\\ \hline
					4 &  (-6.2239, 3.1644) &  4.256\times 10^{-1} \\ \hline
					5 &  (-5.9314, 2.8339) &  1.877\times 10^{-1} \\ \hline
					6 &  (-5.5260, 2.1960) &  1.786\times 10^{-1} \\ \hline
					7 &  (-5.3267, 1.7511) &  3.49\times 10^{-2} \\ \hline \hline
					\textbf{8} &  \mathbf{(-4.9590, 0.7627)} & \mathbf{153.4}	\\ \hline \hline
					9 &  (-4.4674, 0.5786) &  2.812\times 10^{-1} \\ \hline
					10 &  (-3.5849, 0.4102) &  6\times 10^{-3} \\ \hline
				\end{tabular}
				\label{tab:heat}
			\end{table}
			
			For the chosen data, it can be seen in Table \ref{tab:heat} that the local curvature extremes are correctly detected and that point 8 may be considered a global extreme.% However, if other points are taken as a sample, the local extremes do not change their condition, it may happen that the values estimated in them are different and therefore the selection of the global maximum changes.
			
			As mentioned in Section \ref{sec:preprocessing}, in order to estimate tangent directions on the data (using Algorithm \ref{alg:estima5pts}), each convex sub-polygon must have at least 5 points. In this specific application, it is not necessary to use any of the two variants presented above, which are based on inserting new points (see \cite{Albrecht:convexity}) or tangent directions (see \cite{Estrada-Diaz}) if necessary. Instead, the necessary additional points can be generated on each sub-polygon by varying the regularization parameter and finding few new points on the $L$-curve.

			The authors believe that the curvature estimation method ConicCurv may be advantageously combined with the algorithm {\em discrete L-curve criterion} in \cite{Hansen_pruning}. Let us use the same notations of Adaptive Pruning Algorithm in \cite{Hansen_pruning}. If for each pruned L-curve with $\widehat{p}$ points (assumed to be convex) we compute with the aid of ConicCury estimates of the tangent lines and curvatures at the $\widehat{p}$ points of the pruned L-curve, then we may locate the corner by comparing the curvatures estimated with ConicCurv (observe that the slopes $\phi_j$ in \cite{Hansen_pruning} associated to the candidates of corner points are already computed as intermediate result of applying ConicCury to each each pruned L-curve, i.e., the tangent vectors $\tau_j$ obtained from ABFH method). Certainly, considering the curvature estimates obtained with ConicCurv instead of the angle between subsequent line segments of the pruned L-curve is a more robust and accurate approach, especially in the case of non-uniform arc-length distributed points.
			
			Compared to the algorithmic discrete L-curve criterion, the combination of ConicCurv and the algorithmic discrete L-curve criterion is more computationally expensive, but the computational cost remains in both approaches of the same order, since estimating tangent lines and curvatures at the $\widehat{p}$ points for each pruned L-curve is of order $O(\widehat{p})$.

			%%%%%%%%%%%%%%%%%%%%%%%%%%%%%%%%%%%%%%%%
			\section{Application to subdivision snakes}
			\label{snake}
			
			A very popular technique in curve design and Computer Vision is to construct a spline curve that interpolates a sequence of points on the plane. There are infinitely many smooth curves that satisfy these interpolation conditions, so it is desirable to select the one that \emph{best fits} the data. In the context of image segmentation, a popular approach is to start with an initial user-provided curve configuration that automatically deforms itself to delineate the boundary of the object of interest. The deformation is driven by the minimization of an energy functional \cite{BSUM17}. In this sense, it is introduced the notion of \emph{elastica}, which is the curve that minimizes a certain functional from the Theory of Elasticity \cite{LL59}. This functional depends on the coordinates of the points, and some magnitudes that typically are computed from the curve and its derivatives, such as tangent vectors, arc lengths and curvatures.
			
			The present work may be included in a recent trend for curve and surface subdivision schemes, that consists in taking advantage of the hierarchical nature of subdivision schemes (allowing to generate increasingly dense samples of points on the limit spline curve) to propose algorithms for the computation of energy functionals; see, for instance, \cite{BSUM17,BEBL23,DPhD21,DPHE21}. In this section, we show that it is possible to efficiently compute \emph{elastica} with interpolatory subdivision snakes.
			
			For the sake of simplicity, we sketch how to compute approximations of the functional $E + \lambda\,S$, where $E=\int_0^L \,\kappa^2(s)\,ds $ and $S=\int_0^L \,ds$, for a smooth curve arc length parameterized, with total arc length $L$. This linear combination of the bending energy $E$ and stretching energy $S$ happens to be a popular energy functional in computer vision and image segmentation.

			Given an initial control polygon $\mathcal{P}=\{P_ {i} \in \mathbb{R}^d, i = 1, \ldots, n \}$, we can generate a sequence of polygons by doubling their amount of points in each iteration but preserving the points already defined. In such a setting, a binary interpolatory subdivision scheme $\mathcal{F}: \mathcal{P}^j \to \mathcal{P}^{j+1}$ is a rule that generates recursively polygons $\mathcal{P}^{j+1}=\mathcal{F}(\mathcal{P}^{j})=\{P_ {i}^{j+1}, i = 1, \ldots, 2^{j+1}n\}$ that refine the previous ones. For instance, we can consider a two-point interpolatory subdivision scheme:
			\begin{equation*}
				\begin{cases}
					P_ {2i-1}^{j+1} = P_{i}^{j} \\
					P_ {2i}^{j+1} = F(P_{i-1}^{j} , P_{i}^{j}),
				\end{cases}
			\end{equation*}
			for certain $F:\mathbb{R}^d\times\mathbb{R}^d \to \mathbb{R}^d$ and initial condition $\mathcal{P}^0=\{P_ {i}^0=P_ {i}, i = 1, \ldots, n \}$. For a suitable choice of $F$ the sequence of polygons converges uniformly to a smooth curve, the so-called {\em limit curve} \cite{Sabin}.
			In general, a boundary rule have to be proposed (for $i=2^jn$); however, in the context of our use, we just discard that last point and $\lvert \mathcal{P}^{j+1} \rvert = 2 \lvert \mathcal{P}^{j} \rvert - 1$.

			Without loss of generality, we may restrict the presentation to the approximation of $E$ and $S$ for a convex arc of the limit curve with endpoints $P^0_{i-1}$ and $P^0_{i+1}$. Given $j \geq 1$, iterating $j$-times the subdivision scheme $\mathcal{F}$ with initial polygon $\mathcal{P}^0=\{P_ {i-1}^0,P_ {i}^0 ,P_ {i+1}^0\}$ a refined polygon $\mathcal{P}^{j}=\{P_ {m}^{j}, m = 2^{j}(i-1), \ldots, 2^{j}(i+1) \}$ is obtained.
			
			The $2^{j+1}-1$ chords $\overline{P^j_{m}P^j_{m+2}}$, $m=2^j(i-1), \ldots,2^j(i+1)-2$, have their end points on the arc of the limit curve. Then, the length of the curve section (stretching energy) can be approximated by the sum
			\begin{equation}
				S_j= \sum_{r=0}^{2^j-1} \, \Delta_{2^j(i-1)+2r}^j \quad \text{with} \quad \Delta_m^j \,=\lVert P^j_{m+2} - P^j_{m} \rVert.
				\label{Sj}
			\end{equation}
			On the other hand, the mid point quadrature formula provides an approximation to the bending energy
			
			\begin{equation}
				E_j= \sum_{r=0}^{2^{j-1}-1} \, \kappa\left(P^j_{2^j(i-1)+2r+1}\right)^2 \, \Delta_{2^j(i-1)+2r}^j,
				\label{Ej}
			\end{equation}
			where $\kappa(\cdot)$ denotes the curvature at the specified point.
			
			Recalling that $P^j_m = P^{j+2}_{4m}$, if $\,\,2^{j+2}(i-1)+3 \leq 4m \leq 2^{j+2}(i+1)-3\,\,$ holds, then the set of $\,7\,$ consecutive points $\mathcal{C}^{j}_m=\{P^{j+2}_l, \; l=4m-3, \ldots,4m+3\}$ is contained in $\mathcal{P}^{j+2}$ and we may compute an approximation to the curvature at point $P^j_m$, i.e., $\widetilde{\kappa}(P^j_m)$ by applying Algorithm \ref{alg:AlgCurvature} to $\mathcal{C}^{j}_m$.
			It is straightforward to check that if $m=2^j(i-1)+2r+1$ with $r=0,1, \ldots,2^{j-1}-1$, then it holds $\,\,2^{j+2}(i-1)+3 \leq 4m \leq 2^{j+2}(i+1)-3\,\,$. Hence the bending energy \eqref{Ej} may be approximated by the formula
			\begin{equation*}
				\widetilde{E}_j= \sum_{r=0}^{2^{j-1}-1} \, {(\widetilde{\kappa}(P^{j+2}_{2^{j+2}(i-1)+8r+4}))}^2\, \Delta_{2^j(i-1)+2r}^j,
				%\label{Ejonda}
			\end{equation*}
			where $\widetilde{\kappa}(P^{j+2}_{2^{j+2}(i-1)+8r+4})$ denotes the curvature at $P^{j}_{2^{j}(i-1)+2r+1}=P^{j+2}_{2^{j+2}(i-1)+8r+4}$ estimated by applying Algorithm \ref{alg:AlgCurvature} to $\mathcal{C}^{j}_{2^{j}(i-1)+2r+1}$.
			
			Let $j^*>1$ be a fixed maximum number of iterations of the subdivision scheme $\mathcal{F}$. Without much computational effort, we can achieve estimations of the energy functionals $S$ and $E$ with the accuracy required by Computer Design and Computer Vision applications for relatively small values of $j^*$. For fixed $j^*$ (usually with $j^*$ equals 4 or 5) sufficiently accurate estimations are obtained. For those applications requiring estimates of high order of accuracy, the remarkable effective procedure called Richardson extrapolation may be used; see \cite{FRU07}.
			The pseudocode in Algorithm~\ref{alg:AlgEnergy} shows a very simplified algorithm for a \emph{derivative-free} estimation of the elastic energies \eqref{Sj} and \eqref{Ej}.

			\begin{Algorithm}[H]
				\begin{algorithmic}[1]
					\Require $\, j^*, \,\mathcal{P}^0= \{P_{i-1},P_i,P_{i+1}\}$
					\For{$\, j=1, \ldots,j^{*} \,$ }
					\State $\mathcal{P}^j=\mathcal{F}(\mathcal{P}^{j-1})$;
					\EndFor
					\State $ S=0 $;
					\For{ $r=0, \ldots,2^{j^*}-1$}
					\State $m=2^{j^*}(i-1)+2r$;
					\State $\Delta^{j^*}_m = \lVert P^{j^*}_{m+2}-P^{j^*}_m \rVert$
					\State $S = S+\Delta^{j^*}_m $;
					\EndFor
					%ya tengo todos los puntos,los deltas y S
					\State $E=0$;
					\For{$h=1, \ldots,2^{j^*-1}-1$}
					\State $\displaystyle\delta^{j^*-2}_h=\sum_{r=4(h-1)}^{4h-1} \, \Delta^{j^*}_{2^{j^*}(i-1)+2r}$;
					\State $m=2^{j^*}(i-1)+2h$;
					\State $\kappa^{j^*}_h=\widetilde{\kappa}(P^{j^*}_{m})= \texttt{Algorithm}\,\ref{alg:AlgCurvature}\,(P^{j^*}_{m-3}, \ldots,P^{j^*}_{m+3})$;
					\State $E=E+ {(\kappa^{j^*}_h)}^2\,\delta^{j^*-2}_h$;
					\EndFor
					\Ensure $S\,,\,E$
					\end
					{algorithmic}
					\caption{ Stretch and bending energy estimation with \emph{ConicCurv} method.}
					\label{alg:AlgEnergy}
				\end{Algorithm}

				The computational cost of applying Algorithm \ref{alg:AlgEnergy} with bounded $j^*$ to the subdivision curve arc with initial control polygon $\{P_{i-1},P_i,P_{i+1}\}$ is $O(1)$. Hence, if applied to a control polygon with $n$ vertices, the computational cost is $O(n)$.
				
				The following numerical experiment illustrates the performance of \emph{ConicCurv} to estimate the stretch and bending energy of an ellipse arc by generating increasingly dense samples of points with the binary interpolating subdivision scheme presented in \cite{BEBL23,Diaz-Estrada}. Consider the ellipse from the benchmark of representative curves given in Table \ref{tab:comp}. Let be $P^0_{i-1}=(2.7015,1.6829), \,P^0_{i}=(0.3536,1.994) ,\,P^0_{i+1}=(-2.0807,1.8185)$
				points on the ellipse, the Table \ref{tab:rerror} shows the relative errors of the estimations of stretch and bending energy for the ellipse arc with end points $P^0_{i-1}$ and $P^0_{i+1}$, applying \emph{Algorithm \ref{alg:AlgEnergy}} with $j^*=2,3,4$.
				
				\begin{table}[ht]
					\centering
					\caption{Relative errors of energies $S$ and $E$ estimated with \emph{ConicCurv} for increasing maximum number of iterations $j^*$.}
					\footnotesize
					\begin{tabular}{|c|>{$}c<{$}|>{$}c<{$}|} \hline 
						$j^*$ & \text{ Error of estimated S} & \text{ Error of estimated B} \\ \thline
						2 & 0.0071 & 0.2057 \\ \hline 
						3 & 0.0018 & 0.0665 \\ \hline 
						4 & 0.0004 & 0.0089 \\ \hline 
					\end{tabular}
					\label{tab:rerror}
				\end{table}
				
				%%%%%%%%%%%%%%%%%%%%%%%%%%%%%%%%%%%%%%%%%%
				\section{Conclusions}
				We have presented \emph {ConicCurv}, a new \emph{derivative-free} and $SE(2)$-invariant algorithm to estimate the curvature of a plane curve from a sample of data points. This algorithm has \emph{conic precision} and is based on a known method to estimate tangent directions, that is grounded on classic results of Projective Geometry and B\'ezier rational conic curves.
				
				Our theoretical study corroborates the results of the presented numerical experiments: in convex settings \emph {ConicCurv}
				has approximation order 3, while classical 3-\emph{points} curvature approximations that are invariant with respect to the special Euclidean group of transformations have approximation order 1. Further, if the sample points are \emph{uniformly arc-length distributed}, the approximation order is 4.

				The performances of \emph{ConicCurv} were shown by comparing it with some of the most frequently used algorithms to estimate curvatures. Finally, its effectiveness was illustrated as a \emph{derivative-free} estimator of the elastic energy of subdivision curves and of the location of L-curve corners.
				An open source implementation is available at \url{https://github.com/RafaelDieF/ConicCurv}.
				
				%This study does not intend to be conclusive about the relevance of using the proposed algorithm to find the optimal Tikhonov regularization parameter by means of locating the maximum curvature point on the corresponding $L$-curve. There are more experiments and analysis to be done before being able to compare in all the extension with other algorithms, such as those cited in \cite{Hansen_pruning}. However, the experiments performed suggest the reliability of the method to locate the maximum in convex arcs. Besides, there are more problems where the study of the corners of certain curves is needed.
				
				As a completion of the previous researches, a future direction can be pointed out: to extend the curvature estimation method to data on a curve in the space $\mathbb{E}^3$; see \cite{DPhD21,curvat3D}.
				
				\section*{Acknowledgments}
				
				R.D.F. is member of the Gruppo Nazionale per l'Analisi Matematica, la Probabilità  e le loro Applicazioni (GNAMPA) of the Istituto Nazionale di Alta Matematica (INdAM) and acknowledges financial support by PNRR e.INS Ecosystem of Innovation for Next Generation Sardinia (CUP F53C22000430001, codice MUR ECS0000038).

%				\bibliographystyle{plain}
%				\bibliography{CurvaBib}

\begin{thebibliography}{10}
				
				\bibitem{Albrecht2}
				G.~Albrecht, J.-P. B{\'e}car, G.~Farin, and D.~Hansford.
				\newblock On the approximation order of tangent estimators.
				\newblock {\em Computer Aided Geometric Design}, 25(2):80--95, 2 2008.
				
				\bibitem{Albrecht:convexity}
				G.~Albrecht and L.~Romani.
				\newblock Convexity preserving interpolatory subdivision with conic precision.
				\newblock {\em Applied Mathematics and Computation}, pages 4049--4066, 2012.
				
				\bibitem{BSUM17}
				A.~Badoual, D.~Schmitter, V.~Uhlmann, and M.~Unser.
				\newblock Multiresolution subdivision snakes.
				\newblock {\em IEEE Transactions on Image Processing}, 26(3):1188--1201, 3
				2017.
				
				\bibitem{BEBL23}
				L.~{Barrera Rodr\'iguez}, J.~{Estrada Sarlabous}, S.~{Behar Jequ\'in}, and
				S.~{Leyva S\'anchez}.
				\newblock Implementation of a fair {H}ermite interpolatory scheme based on
				quadratic {A}-spline elastica.
				\newblock {\em Revista {I}nvestigaci\'on {O}peracional}, 44(4):516--523, 2023.
				
				\bibitem{SofiaCubicAspline}
				S.~{Behar-Jequ{\'i}n}, J.~{Estrada-Sarlabous}, and V.~{Hern{\'a}ndez-Mederos}.
				\newblock Constrained interpolation with implicit plane cubic a-splines.
				\newblock In Jos{\'e} {Ruiz-Shulcloper} and Walter~G. Kropatsch, editors, {\em
					Progress in Pattern Recognition, Image Analysis and Applications}, pages
				724--732, Berlin, Heidelberg, 2008. Springer Berlin Heidelberg.
				
				\bibitem{Belyaev}
				A.~Belyaev.
				\newblock Plane and space curves. curvature. curvature-based features. \url{http://www.mpi-sb.mpg.de/~belyaev/gm06}
				
				\bibitem{Be99}
				A.~Belyaev.
				\newblock A note on invariant three-point curvature approximations.
				\newblock {\em Surikaisekikenkyusho Kokyuroko (RIMS Kyoto)}, 1111:157--164,
				1999.
				
				\bibitem{Ca98}
				E.~Calabi, P.J. Olver, C.~Shakiban, A.~Tannenbaum, and S.~Haker.
				\newblock Differential and numerically invariant signature curves applied to
				object recognition.
				\newblock {\em International Journal of Computer Vision}, 26:107--135, 1998.
				
				\bibitem{CMCR00}
				D.~Calvetti, S.~Morigi, L.~Reichel, and F.~Sgallari.
				\newblock Tikhonov regularization and the {L}-curve for large discrete
				ill-posed problems.
				\newblock {\em Journal of Computational and Applied Mathematics},
				123(1):423--446, 2000.
				\newblock Numerical Analysis 2000. Vol. III: Linear Algebra.
				
				\bibitem{Car35}
				{\'E}.~Cartan.
				\newblock La m{\'e}thode du rep{\`e}re mobile, la th{\'e}orie des groupes
				continus, et les espaces g{\'e}n{\'e}ralis{\'e}s.
				\newblock {Expos{\'e}s} de g{\'e}om{\'e}trie {V}, 1935.
				
				\bibitem{CGG02}
				J.L. Castellanos, S.~G{\'o}mez, and V.~Guerra.
				\newblock The triangle method for finding the corner of the {L}-curve.
				\newblock {\em Applied Numerical Mathematics}, 43(4):359--373, 2002.
				
				\bibitem{ChenUcurve}
				M.~Chen, H.~Su, Y.~Zhou, C.~Cai, D.~Zhang, and J.~Luo.
				\newblock Automatic selection of regularization parameters for dynamic
				fluorescence molecular tomography: a comparison of l-curve and u-curve
				methods.
				\newblock {\em Biomedical Optics Express}, 7(12):5021--5041, Dec 2016.
				
				\bibitem{DPhD21}
				R.~{D\'iaz~Fuentes}.
				\newblock {\em Solving the {H}ermite interpolation problem with scalar
					subdivision schemes}.
				\newblock Ph{D} thesis, Universit\`a degli Studi dell'Insubria, Italia, 2021.
				
				\bibitem{Diaz-Estrada}
				R.~{D\'iaz~Fuentes} and J.~{Estrada~Sarlabous}.
				\newblock Algoritmo de asignaci\'on de curvatura para datos en el plano.
				\newblock Technical Report 721, ICIMAF, November 2013.
				
				\bibitem{DPHE21}
				R.~{D\'iaz~Fuentes}, J.~{Pino~Torres}, V.~{Hern\'andez~Mederos}, and
				J.~{Estrada~Sarlabous}.
				\newblock Stationary {S}ubdivision {S}nakes for {C}ontour {D}etection.
				\newblock {\em Revista {I}nvestigaci\'on {O}peracional.}, 42(2):238--266, 2021.
				
				\bibitem{Estrada-Diaz}
				J.~{Estrada~Sarlabous} and R.~{D\'iaz~Fuentes}.
				\newblock Esquema de subdivisi{\'o}n interpolatorio basado en spline
				c{\'o}nico.
				\newblock Technical Report 582, ICIMAF, November 2010.
				
				\bibitem{Farin}
				G~Farin.
				\newblock {\em Curves and surfaces for CAGD: a practical guide}.
				\newblock Morgan Kaufman Publishers Inc., 5 edition, 2002.
				
				\bibitem{FRU07}
				M.~Floater, A.~Rasmussen, and U.~Reif.
				\newblock Extrapolation methods for approximating arc length and surface area.
				\newblock {\em Numerical Algorithms}, 44:235--248, 2007.
				
				\bibitem{Hansen}
				P.~C. Hansen and D.~P. O'Leary.
				\newblock The use of the {L}-curve in the regularization of discrete ill-posed
				problems.
				\newblock {\em SIAM Journal on Scientific Computing}, 14:1487--1503, 1993.
				
				\bibitem{Regutool}
				P.C. Hansen.
				\newblock Regularization tools version 4.0 for matlab 7.3.
				\newblock {\em Numerical Algorithms}, 46(2):189--194, 2007.
				
				\bibitem{Hansen_pruning}
				P.C. Hansen, T.K. Jensen, and G.~Rodriguez.
				\newblock An adaptive pruning algorithm for the discrete {L}-curve criterion.
				\newblock {\em Journal of Computational and Applied Mathematics},
				198(2):483--492, 2007.
				
				\bibitem{Hermann07}
				S.~Hermann and R.~Klette.
				\newblock A {C}omparative {S}tudy on 2{D} {C}urvature {E}stimators.
				\newblock In {\em International Conference on Computing: Theory and
					Applications (ICCTA'07)}, pages 584--589, Kolkata, India, 2007.
				
				\bibitem{KiRa19}
				S.~Kindermann and K.~Raik.
				\newblock A simplified {L}-curve method as error estimator.
				\newblock {\em ETNA - Electronic Transactions on Numerical Analysis},
				53:217--238, 2019.
				
				\bibitem{Krawczyk}
				D.~Krawczyk-Sta\'ndo and M.~Rudnicki.
				\newblock Regularization parameter selection in discrete ill-posed problems -
				the use of the {U}-curve.
				\newblock {\em International Journal of Applied Mathematics and Computer
					Science}, 17(2):157--164, 2007.
				
				\bibitem{LL59}
				L.~Landau and E.~Lifshitz.
				\newblock {\em Theory of {E}lasticity}.
				\newblock London, U.K, 1959.
				
				\bibitem{Lewiner}
				T.~Lewiner, J.~Gomes, H.~Lopes, and M.~Craizer.
				\newblock Arc-length based curvature estimator.
				\newblock {\em Sibgrapi 2004 (XVII Brazilian Symposium on Computer Graphics and
					Image Processing)}, pages 250--257, 2004.
				
				\bibitem{curvat3D}
				J.~Peng, Q.~Li, C.{-}C.~J. Kuo, and M.~Zhou.
				\newblock Estimating gaussian curvatures from 3{D} meshes.
				\newblock In {\em Human Vision and Electronic Imaging VIII}, volume 5007 of
				{\em {SPIE} Proceedings}, pages 270--280. {SPIE}, 2003.
				
				\bibitem{Sabin}
				M.~Sabin.
				\newblock {\em Analysis and Design of Univariate Subdivision Schemes}.
				\newblock Springer, 2010.
				
				\bibitem{St93}
				B.~Sturmfels.
				\newblock {\em Algorithms in Invariant Theory}.
				\newblock Springer-Verlag, New York, 1993.
				
				\bibitem{We46}
				H.~Weyl.
				\newblock {\em Classical {G}roups}.
				\newblock Princeton, N.J., 1946.
				
				\bibitem{Wo93}
				M.~Worring and A.W.M. Smeulders.
				\newblock Digital curvature estimation.
				\newblock {\em CVGIP: Image Understanding}, 58(3):366--382, 1993.
				
			\end{thebibliography}
%				

			\end{document}